\documentclass[a4paper,oneside,reqno]{amsart}
\usepackage[utf8]{inputenc}

\usepackage{amssymb,amsmath}
\usepackage{thmtools}
\usepackage{mathtools}
\usepackage{enumitem}
\usepackage{graphicx}
\usepackage{hyperref}
\usepackage[capitalize]{cleveref}
\usepackage{stmaryrd} 
\usepackage{tikz}

\usepackage{geometry}
\declaretheorem[numberwithin=section]{theorem}
\declaretheorem[sibling=theorem]{lemma}
\declaretheorem[sibling=theorem]{fact}
\declaretheorem[sibling=theorem]{claim}

\declaretheorem[sibling=theorem]{conjecture}
\declaretheorem[sibling=theorem]{corollary}
\declaretheorem[sibling=theorem]{proposition}
\declaretheorem[sibling=theorem,style=definition]{definition}

\crefname{fact}{Fact}{Facts}
\crefname{claim}{Claim}{Claims}
\crefname{hypothesis}{Hypothesis}{Hypotheses}
\crefname{conjecture}{Conjecture}{Conjectures}
\Crefname{conjecture}{Conjecture}{Conjectures}

\makeatletter
\renewcommand{\theHtheorem}{\thesection.\arabic{theorem}}
\newcommand{\FixSiblingH}[1]{%
  \expandafter\renewcommand\csname theH#1\endcsname{\theHtheorem}}
\makeatother

\FixSiblingH{lemma}

\FixSiblingH{fact}
\FixSiblingH{claim}
\FixSiblingH{hypothesis}
\FixSiblingH{conjecture}
\FixSiblingH{corollary}
\FixSiblingH{proposition}
\FixSiblingH{definition}
\FixSiblingH{remark}

\newcommand{\indsetgraph}{
  \begin{tikzpicture}[baseline={(0,-0.1)},
    every loop/.style={},
    vertex/.style={circle,fill=black,inner sep=1.2pt},
    edge/.style={line width=0.5pt}]
    \clip (-0.05,-0.15) rectangle (0.8,0.15);
    \node[vertex] (v1) at (0,0) {};
    \node[vertex] (v2) at (0.5,0) {};
    \draw[edge] (v1) -- (v2);
    \path[edge] (v2) edge[in=45, out=-45, min distance=12] (v2);
  \end{tikzpicture}
}

\DeclareMathOperator{\ex}{\mathrm{ex}}
\DeclareMathOperator{\Ex}{\mathbb{E}}
\DeclareMathOperator{\Hom}{\mathrm{Hom}}
\DeclareMathOperator{\DKulLei}{D_{\mathrm{KL}}}

\renewcommand{\Pr}{\mathbb{P}}
\renewcommand{\emptyset}{\varnothing}

\newcommand{\DKL}[2]{\DKulLei({#1} \, \| \, {#2})}
\newcommand{\dKL}{d_{\mathrm{KL}}}
\newcommand{\dTV}{d_{\mathrm{TV}}}
\newcommand{\Ber}{\mathrm{Ber}}

\newcommand{\br}[1]{\llbracket{#1}\rrbracket}

\title{Entropy methods in combinatorics}
\author{Wojciech Samotij}
\address{School of Mathematical Sciences, Tel Aviv University, Tel Aviv, Israel}
\email{samotij@tauex.tau.ac.il}
\date{}

\begin{document}

\begin{abstract}
  Even though entropy methods have been used in combinatorics for at least five decades, only in recent years has their use really proliferated.
  There are now tens, if not hundreds, of combinatorial papers that crucially rely on the notion of entropy and exploit the various powerful identities and inequalities relating entropies.
  In this short survey article, we give a selective overview of these works and discuss several of them in more detail, outlining some of the key ideas.
\end{abstract}

\maketitle

\section{Introduction.}
The notion of \emph{entropy} was introduced by Claude Shannon in his landmark 1948 paper~\cite{Sha48} that laid rigorous foundations for the field of information theory.
Shannon's entropy quantifies the amount of uncertainty that is associated with a discrete random variable.
The fundamental property of entropy that makes it relevant to enumeration problems is that the entropy of a random variable that is uniformly distributed on a~finite set $\mathcal{X}$ is the logarithm of the cardinality of $\mathcal{X}$.
This fact allows one to turn counting problems into problems of estimating entropy.
The advantage of this seemingly circuitous approach is that there are several powerful inequalities involving entropies that do not seem to have any direct counting analogues.
The origins of the idea of using entropy in combinatorics are hard to trace, but it has surely been around for at least five decades (see, e.g., \cite{ChuGraFraShe86} or \cite[Chapter~2]{CsiKor11}).
Once a niche tool, since the turn of the century entropy has become a~widely-applied technique.
The aim of this survey is to present some of the most influential applications of entropy methods in combinatorics and highlight the underlying ideas.

\subsection{Entropy.}
\label{sec:entropy}

There are several different ways to introduce and motivate the notion of entropy, some of which are discussed in detail by Rényi~\cite{Ren65}.
The author is particularly fond of the following axiomatic approach he first encountered in the excellent lecture notes of Galvin~\cite{Gal14}.
We would like to define a function $I$ that measures the information one learns from the occurrence of a certain subset of outcomes of a random experiment;
one then defines the entropy of a discrete random variable $X$ as the average amount of information associated with the outcome of $X$.
We postulate that $I$ should: (i) take only nonnegative values and depend only on the probability of the said event; (ii) be decreasing in this probability (less likely events should carry more information); and (iii) assign to the intersection of two independent events the sum of the amounts of information associated with the two events.
In other words, we are looking for a function $I \colon (0,1] \to [0,\infty)$ that is decreasing and satisfies $I(pq) = I(p)+I(q)$ for all $p, q \in (0,1]$.
Since one can show that the only functions that meet this specification are logarithmic functions, that is, $I(p) = - \log_b p$ for some $b > 1$, this leads to the following definition.
\begin{definition}
  The entropy of a random variable $X$ taking values in a finite set $\mathcal{X}$ is
  \[
    H(X) \coloneqq  - \sum_{x \in \mathcal{X}} \Pr(X = x) \cdot \log\Pr(X = x).\footnote{Throughout, $\log$ is the natural logarithm and we follow the convention that $0 \log 0 = 0$.}
  \]
\end{definition}
The following simple fact is a generalisation of the fundamental property of entropy mentioned in the introduction.
\begin{fact}
  \label{fact:entropy-uniform}
  If a random variable $X$ takes values in a finite set $\mathcal{X}$, then
  \[
    H(X) \le \log |\mathcal{X}|.
  \]
  Moreover, equality holds if and only if $X$ is uniform on $\mathcal{X}$.
\end{fact}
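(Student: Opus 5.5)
We will prove the inequality with Jensen's inequality applied to the strictly concave function $\log$. Let $S = \{x \in \mathcal{X} : \Pr(X = x) > 0\}$ be the support of $X$, and write $p_x = \Pr(X=x)$. By the convention $0\log 0 = 0$, only the terms with $x \in S$ contribute to $H(X)$, so
\[
  H(X) = \sum_{x \in S} p_x \log \frac{1}{p_x} = \Ex\left[\log \frac{1}{p_X}\right].
\]
The second expression views $1/p_X$ as a random variable taking the value $1/p_x$ with probability $p_x$, for each $x \in S$.

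Jensen's inequality for the concave function $\log$ gives
\[
  \Ex\left[\log \frac{1}{p_X}\right] \le \log \Ex\left[\frac{1}{p_X}\right] = \log \sum_{x \in S} p_x \cdot \frac{1}{p_x} = \log |S|.
\]
Since $|S| \le |\mathcal{X}|$ and $\log$ is increasing, $H(X) \le \log |\mathcal{X}|$. An alternative route is to use $\log t \le t - 1$ with $t = 1/(|\mathcal{X}| p_x)$ and sum over $S$, which yields the same bound.

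The characterisation of equality is the only step needing care. If $X$ is uniform on $\mathcal{X}$, then every $p_x = 1/|\mathcal{X}|$, and a direct computation gives $H(X) = \log|\mathcal{X}|$. Conversely, suppose $H(X) = \log |\mathcal{X}|$. Then both inequalities in the chain above are equalities.

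First, $|S| = |\mathcal{X}|$, so $S = \mathcal{X}$. Second, equality holds in Jensen's inequality. Since $\log$ is strictly concave, this forces the random variable $1/p_X$ to be almost surely constant. Because every $x \in S$ has positive probability, this means $p_x$ is the same for all $x \in S = \mathcal{X}$, so $X$ is uniform on $\mathcal{X}$.

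To keep the argument self-contained, the strict-concavity equality case of Jensen can also be obtained from the tangent-line inequality $\log t \le t - 1$, with equality only at $t = 1$. Taking $t = 1/(|\mathcal{X}| p_x)$ for $x \in S$, equality in the summed inequality forces $p_x = 1/|\mathcal{X}|$ for every $x \in S$. It also forces $\sum_{x\in S} 1/|\mathcal{X}| = 1$, which says $S = \mathcal{X}$.
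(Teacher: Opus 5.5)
Your proof is correct and is essentially the paper's argument. The paper derives the fact from the identity $H(X) = \log|\mathcal{X}| - \DKL{X}{U_{\mathcal{X}}}$ together with nonnegativity of relative entropy (with equality if and only if the two distributions coincide), and it proves that nonnegativity via $\log y \ge 1 - 1/y$; this is exactly your tangent-line computation with $t = 1/(|\mathcal{X}| p_x)$, and your Jensen step is the same concavity of $\log$ in another guise.
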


\subsection{Conditional entropy.}
\label{sec:conditional-entropy}

A fundamental notion that makes the entropy method so powerful is that of \emph{conditional entropy}.
Informally speaking, the conditional entropy of a random variable $X$ given another random variable $Y$ (defined on the same probability space) is the expected amount of information learned from the outcome of $X$ assuming that the outcome of $Y$ is already known.
More precisely, for every $y$ such that $\Pr(Y = y)$ is nonzero, define the conditioned random variable $X^y$ by $\Pr(X^y = x) \coloneqq \Pr(X = x \mid Y = y)$.
The conditional entropy of $X$ given~$Y$ is then the expectation of the (random) entropy $H(X^y)$, where $y$ is sampled according to $Y$.
\begin{definition}
  Suppose that $X$ and $Y$ are random variables taking values in finite sets $\mathcal{X}$ and $\mathcal{Y}$, respectively.
  The conditional entropy of $X$ given $Y$ is
  \[
    H(X \mid Y) \coloneqq - \sum_{y \in \mathcal{Y}} \Pr(Y = y) \sum_{x \in \mathcal{X}} \Pr(X = x \mid Y = y) \log \Pr(X = x \mid Y = y).
  \]
\end{definition}
It is easily verified that $H(X \mid Y) = H(X, Y) - H(Y)$, where we wrote $H(X,Y)$ in place of $H((X,Y))$ to denote the entropy of the random vector $(X,Y)$.
The straightforward generalisation of this identity to $n$-dimensional vectors is the so-called \emph{chain rule} for entropies.
\begin{fact}[Chain rule]
  \label{fact:chain-rule}
  For every sequence $X_1, \dotsc, X_n$ of discrete random variables,
  \[
    H(X_1, \dotsc, X_n) = \sum_{i = 1}^n H(X_i \mid X_1, \dotsc, X_{i-1}).
  \]
\end{fact}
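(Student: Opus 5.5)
The plan is induction on $n$, driven by the two-variable identity $H(X \mid Y) = H(X,Y) - H(Y)$ noted just before the statement. Since that identity was only said to be ``easily verified'', I will first check it directly from the definitions. Write $p(x,y) = \Pr(X = x, Y = y)$ and $p(y) = \Pr(Y = y)$, so that $\Pr(X = x \mid Y = y) = p(x,y)/p(y)$ whenever $p(y) > 0$. Multiplying through by $p(y)$ turns the definition into
\[
  H(X \mid Y) = -\sum_{x,y} p(x,y)\bigl(\log p(x,y) - \log p(y)\bigr).
\]
The first part is exactly $H(X,Y)$. In the second part, $\sum_x p(x,y) = p(y)$, so it collapses to $-H(Y)$ after the sign is accounted for. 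The only care needed concerns zero probabilities. Terms with $p(y) = 0$ are absent from the definition, and they also contribute nothing to $H(X,Y)$ or $H(Y)$ under the convention $0 \log 0 = 0$. Pairs with $p(x,y) = 0$ but $p(y) > 0$ contribute zero on both sides.

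For the induction, the case $n = 1$ is trivial, reading $H(X_1 \mid \,\cdot\,)$ with empty conditioning as $H(X_1)$. For the inductive step, apply the two-variable identity with $X = X_n$ and $Y = (X_1, \dotsc, X_{n-1})$. This is legitimate because a random vector of discrete variables is itself a discrete random variable with finitely many values. Here $H(X,Y)$ is the entropy of the pair $((X_1,\dotsc,X_{n-1}), X_n)$. This equals $H(X_1,\dotsc,X_n)$, since entropy depends only on the distribution and the regrouping is a bijection of value sets. Hence
\[
  H(X_1,\dotsc,X_n) = H(X_1,\dotsc,X_{n-1}) + H(X_n \mid X_1,\dotsc,X_{n-1}),
\]
and applying the induction hypothesis to the first term gives the claimed sum.

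I expect no real obstacle. The only points requiring attention are the zero-probability bookkeeping in the two-variable identity and the observation that entropy is invariant under relabelling values by a bijection. The latter justifies identifying nested tuples with flat ones.
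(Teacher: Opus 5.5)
Your proposal is correct and follows the paper's own (implicit) argument: the paper states the two-variable identity $H(X \mid Y) = H(X,Y) - H(Y)$ as easily verified, and calls the chain rule its straightforward generalisation to $n$ variables. Your direct verification of that identity, with the zero-probability bookkeeping, followed by induction with $Y = (X_1, \dotsc, X_{n-1})$ and the bijective-relabelling remark, is exactly that argument written out in full.
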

The next key property formalises the intuitive statement that conditioning on more information can only decrease conditional entropy.
\begin{fact}
  \label{fact:monotonicity-of-conditional-entropy}
  For every triple $X, Y, Z$ of discrete random variables,
  \[
    H(X \mid Y, Z) \le H(X \mid Y) \le H(X).
  \]
  Moreover, the second inequality holds with equality if and only if $X$ and $Y$ are independent while the first inequality holds with equality if and only if $X$ and $Z$ are conditionally independent given $Y$.\footnote{This means that, for every $y$ with $\Pr(Y = y) > 0$, the conditioned random variables $X^y$ and $Z^y$ are independent.}
\end{fact}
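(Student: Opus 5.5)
The plan is to derive both inequalities from a single concavity statement: Jensen's inequality applied to the strictly concave function $\varphi(t) = -t\log t$ on $[0,1]$. The second inequality is proved first. The first inequality will then follow by applying the same argument conditionally on each value of $Y$.

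For $H(X \mid Y) \le H(X)$, fix $x \in \mathcal{X}$. Write $p_y \coloneqq \Pr(Y=y)$, keeping only those $y$ with $p_y>0$, and set $q_{x,y} \coloneqq \Pr(X=x \mid Y=y)$. By the law of total probability, $\Pr(X=x) = \sum_y p_y q_{x,y}$. Since $\varphi$ is concave, Jensen gives
\[
  \varphi\Bigl(\sum_y p_y q_{x,y}\Bigr) \ge \sum_y p_y \varphi(q_{x,y}).
\]
Summing over $x$, the left side becomes $H(X)$ and the right side becomes $H(X \mid Y)$, by the definition of conditional entropy and after swapping the order of summation.

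For the equality case, strict concavity of $\varphi$ shows that equality holds for a given $x$ if and only if $q_{x,y}$ does not depend on $y$, ranging over $y$ with $p_y>0$. In that case $q_{x,y}$ must equal $\Pr(X=x)$. Requiring this for every $x$ is exactly the statement that $X$ and $Y$ are independent. The converse direction, that independence gives equality, is immediate from the definitions.

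For the first inequality, fix $y$ with $\Pr(Y=y)>0$ and work in the conditional probability space given $Y=y$. There $X$ and $Z$ become the conditioned variables $X^y$ and $Z^y$. Applying the inequality just proved to this pair gives
\[
  H(X^y \mid Z^y) \le H(X^y).
\]
The remaining step is to check that averaging this over $y$ sampled from $Y$ gives $H(X \mid Y,Z) \le H(X \mid Y)$. The right side averages to $H(X \mid Y)$ by definition. For the left side, use
\[
  \Pr(X=x \mid Y=y, Z=z) = \Pr(X^y = x \mid Z^y = z) \quad\text{and}\quad \Pr(Y=y,Z=z) = \Pr(Y=y)\Pr(Z^y=z).
\]
With these, the double sum defining $H(X\mid Y,Z)$ regroups as $\sum_y \Pr(Y=y)\, H(X^y \mid Z^y)$.

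Equality in the first inequality holds if and only if $H(X^y\mid Z^y) = H(X^y)$ for every $y$ with positive probability, since each term of the average is already an inequality. By the equality case above, this is exactly the requirement that $X^y$ and $Z^y$ are independent for each such $y$, which is the stated conditional independence.

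The main obstacle is bookkeeping rather than ideas. One point is restricting all sums to $y$ (and pairs $(y,z)$) of positive probability, so that the conditional distributions are defined. The other is invoking strict concavity correctly, including at $t=0$, where $\varphi$ is still strictly concave on the closed interval. An alternative route for the equality statement is to write $H(X)-H(X\mid Y)$ as a Kullback--Leibler divergence, but the Jensen approach avoids introducing that notion before the paper does.
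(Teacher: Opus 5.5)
Your proof is correct, including the equality cases. Restricting to $y$ with $\Pr(Y=y)>0$ makes the Jensen weights positive, and $-t\log t$ is strictly concave on all of $[0,1]$. It is not, however, the route the paper indicates. The paper states this fact without proof and later remarks that it follows from nonnegativity of relative entropy (\cref{fact:relative-entropy-nonnegative}), by writing both gaps exactly as divergences:
\begin{gather*}
  H(X) - H(X \mid Y) = \DKL{(X,Y)}{X \otimes Y}, \\
  H(X \mid Y) - H(X \mid Y,Z) = \sum_{y \,:\, \Pr(Y=y)>0} \Pr(Y=y) \cdot \DKL{(X^y,Z^y)}{X^y \otimes Z^y}.
\end{gather*}
The first identity is the case $n=2$ of \eqref{eq:subadditivity-via-DKL} combined with $H(X \mid Y) = H(X,Y) - H(Y)$. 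The equality clause of \cref{fact:relative-entropy-nonnegative} then gives independence, respectively conditional independence, with no separate strict-concavity analysis. The two arguments are close relatives. Summing your per-$x$ Jensen gaps $\sum_y p_y\, q_{x,y} \log\bigl(q_{x,y}/\Pr(X=x)\bigr)$ over $x$ gives exactly $\DKL{(X,Y)}{X \otimes Y}$. Your reduction of the first inequality to the second by conditioning on $Y=y$ mirrors the second identity above. The paper's proof of \cref{fact:relative-entropy-nonnegative} is itself a tangent-line concavity bound for $\log$.

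What your route buys is self-containedness. It uses only the definitions, so it can sit where the paper places the fact, before relative entropy is introduced. What the divergence route buys is an exact formula for the slack, not just a certificate that it is nonnegative, and the paper exploits this later. Pinsker's inequality (\cref{prop:Pinsker}) turns a small value of $H(X) - H(X \mid Y) = H(X)+H(Y)-H(X,Y)$ into approximate independence. The Kruskal--Katona stability argument and \cref{prop:Turan-density-entropy} feed such entropy gaps into the data processing inequality.
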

An immediate consequence of \cref{fact:chain-rule,fact:monotonicity-of-conditional-entropy} is the following subadditivity property.
\begin{fact}[Subadditivity]
  \label{fact:entropy-subadditivity}
  For every sequence $X_1, \dotsc, X_n$ of discrete random variables,
  \[
    H(X_1, \dotsc, X_n) \le \sum_{i=1}^n H(X_i).
  \]
  Moreover, equality holds if and only if $X_1, \dotsc, X_n$ are independent.
\end{fact}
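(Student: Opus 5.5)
We combine the chain rule with the monotonicity of conditional entropy. By \cref{fact:chain-rule},
\[
  H(X_1, \dotsc, X_n) = \sum_{i=1}^n H(X_i \mid X_1, \dotsc, X_{i-1}).
\]
For each $i$, the second inequality of \cref{fact:monotonicity-of-conditional-entropy} applies with $X = X_i$ and $Y = (X_1, \dotsc, X_{i-1})$; for $i = 1$ the conditioning is vacuous. It gives $H(X_i \mid X_1, \dotsc, X_{i-1}) \le H(X_i)$. Summing these $n$ inequalities yields the claimed bound.

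For the equality case, note that the inequality is a sum of $n$ termwise inequalities. Equality therefore holds if and only if every term is tight, that is, $H(X_i \mid X_1, \dotsc, X_{i-1}) = H(X_i)$ for all $i$. By the equality clause of \cref{fact:monotonicity-of-conditional-entropy}, this is equivalent to the vector $(X_1, \dotsc, X_{i-1})$ being independent of $X_i$ for every $i$.

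It remains to check that this sequential condition is equivalent to mutual independence of $X_1, \dotsc, X_n$. The direction from mutual independence is immediate, since a block of independent variables is independent of any other one. For the converse, I would induct on $i$ to show that
\[
  \Pr(X_1 = x_1, \dotsc, X_i = x_i) = \prod_{j \le i} \Pr(X_j = x_j).
\]
The inductive step factors the joint law as $\Pr(X_i = x_i)\Pr(X_1 = x_1, \dotsc, X_{i-1} = x_{i-1})$, which is exactly the independence of $X_i$ from the vector $(X_1, \dotsc, X_{i-1})$, and then applies the induction hypothesis.

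This last equivalence is the only step that needs care, and it is routine. There is no real obstacle: the statement is essentially an immediate corollary of the two cited facts.
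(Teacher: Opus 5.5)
Your proof is correct and follows the paper's own route: the paper presents subadditivity as an immediate consequence of the chain rule and the monotonicity of conditional entropy. Your treatment of the equality case fills in details the paper leaves implicit. Equality forces every termwise inequality to be tight, and you then correctly show by induction that sequential independence is equivalent to mutual independence.
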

Similarly, one can derive the following more general, conditional version of the subadditivity property.
\begin{fact}
  \label{fact:conditional-entropy-subadditivity}
  For every sequence $X_1, \dotsc, X_n, Y$ of discrete random variables,
  \[
    H(X_1, \dotsc, X_n \mid Y) \le \sum_{i=1}^n H(X_i \mid Y)
  \]
  and equality holds if and only if $X_1, \dotsc, X_n$ are conditionally independent given $Y$.
\end{fact}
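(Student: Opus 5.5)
The plan is to mimic the derivation of \cref{fact:entropy-subadditivity}, but with $Y$ carried along throughout as extra conditioning. The first step is a conditional chain rule. Applying \cref{fact:chain-rule} to the sequence $Y, X_1, \dotsc, X_n$ and separately to $Y$ alone, and using $H(X \mid Y) = H(X,Y) - H(Y)$, I get
\[
  H(X_1, \dotsc, X_n \mid Y) = H(Y, X_1, \dotsc, X_n) - H(Y) = \sum_{i=1}^n H(X_i \mid Y, X_1, \dotsc, X_{i-1}).
\]
Here I use that $H(X_i \mid Y, X_1, \dotsc, X_{i-1})$ is just $H(X_i \mid (Y, X_1, \dotsc, X_{i-1}))$, with the conditioning vector reordered. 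Entropy is clearly invariant under a bijective relabelling of the values of a random vector, so the reordering is harmless.

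The second step bounds each summand. I apply the first inequality of \cref{fact:monotonicity-of-conditional-entropy} with $X = X_i$, with $Y$ in its role, and with $Z = (X_1, \dotsc, X_{i-1})$. This gives
\[
  H(X_i \mid Y, X_1, \dotsc, X_{i-1}) \le H(X_i \mid Y).
\]
For $i = 1$ this is an equality, since nothing extra is conditioned on. Summing over $i$ yields the inequality.

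The main work is the equality case. Equality in the sum holds if and only if equality holds in every term. By the equality clause of \cref{fact:monotonicity-of-conditional-entropy}, term $i$ is tight exactly when $X_i$ and $(X_1, \dotsc, X_{i-1})$ are conditionally independent given $Y$. That is, for every $y$ with $\Pr(Y = y) > 0$, the conditioned variable $X_i^y$ is independent of $(X_1^y, \dotsc, X_{i-1}^y)$.

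It then remains to show that these $n$ conditions, holding for all $i$, are equivalent to mutual independence of $X_1^y, \dotsc, X_n^y$ for each fixed $y$. This is the step needing the most care, though it is standard. Given the successive independences, induction on $i$ shows that the joint law of $(X_1^y, \dotsc, X_i^y)$ factorises as a product of its marginals. Conversely, mutual independence clearly implies each successive independence.

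A cleaner route to the equality case is to fix $y$ and apply \cref{fact:entropy-subadditivity} directly to $X_1^y, \dotsc, X_n^y$. This gives $H(X_1^y, \dotsc, X_n^y) \le \sum_i H(X_i^y)$, with equality if and only if these variables are independent. Averaging over $y$ drawn according to $Y$ recovers the inequality from the definition of conditional entropy, because $(X_1, \dotsc, X_n)^y = (X_1^y, \dotsc, X_n^y)$. The averaged inequality is an equality if and only if each pointwise inequality is tight for every $y$ of positive probability, since each term is nonnegative slack. That is exactly conditional independence given $Y$. I would present this second argument as the proof, as it handles the equality case most transparently.
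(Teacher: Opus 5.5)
Your proposal is correct. Your first argument is precisely what the paper means when it says the fact follows ``similarly'' to \cref{fact:entropy-subadditivity} from \cref{fact:chain-rule,fact:monotonicity-of-conditional-entropy}: the conditional chain rule $H(X_1,\dotsc,X_n \mid Y)=\sum_{i} H(X_i \mid Y, X_1,\dotsc,X_{i-1})$, then the first inequality of \cref{fact:monotonicity-of-conditional-entropy} applied term by term, together with its conditional-independence equality clause.

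The route you actually choose to present is genuinely different. You reduce the conditional statement to the unconditional one: apply \cref{fact:entropy-subadditivity} to the conditioned vector $(X_1,\dotsc,X_n)^y$ for each $y$ with $\Pr(Y=y)>0$, then average over $y$ sampled according to $Y$, which is exactly how conditional entropy is defined.

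\textbf{What your route buys.} The equality case matches, term for term, the paper's definition of conditional independence, which is itself pointwise in $y$. The paper's route instead produces the $n$ conditions ``$X_i$ and $(X_1,\dotsc,X_{i-1})$ are conditionally independent given $Y$''. It then needs your induction to upgrade these to mutual conditional independence.

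\textbf{A caveat.} Whether that induction is really avoided depends on how the equality clause of \cref{fact:entropy-subadditivity} is proved. Via the chain rule, it is the same induction in disguise. Via \eqref{eq:subadditivity-via-DKL} and \cref{fact:relative-entropy-nonnegative}, it is immediate, since the divergence vanishes exactly when the joint law equals the product of the marginals.

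\textbf{What the paper's route buys.} It is self-contained given \cref{fact:chain-rule,fact:monotonicity-of-conditional-entropy} and never passes through \cref{fact:entropy-subadditivity}.

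\textbf{A notational remark.} The paper defines $X^y$ only through its law, so $(X_1^y,\dotsc,X_n^y)$ must be read as the coordinates of the conditioned vector $(X_1,\dotsc,X_n)^y$; the marginals alone do not determine the joint law. This is how you use it.
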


\subsection{Binary entropy.}
\label{sec:binary-entropy}

In many applications of the entropy method, one often considers the entropy of two-valued random variables.
It is therefore customary to introduce the \emph{binary entropy} function, which assigns to a given parameter $p \in [0,1]$ the entropy of a random variable that takes only two values, with probabilities $p$ and $1-p$, respectively (e.g., the Bernoulli random variable with success probability $p$, denoted by $\Ber(p)$).
\begin{definition}
  The binary entropy is the function $h \colon [0,1] \to [0, \log 2]$ defined by
  \[
    h(p) \coloneqq H(\Ber(p)) = -p\log p - (1-p)\log(1-p).
  \]
\end{definition}
The binary entropy function arises naturally in the following classical estimate, whose elegant, entropy-based proof is perhaps the archetypal application of the entropy method.
\begin{proposition}
  For all integers $k$ and $n$ satisfying $0 \le k \le n/2$,
  \[
    \sum_{j = 0}^{k} \binom{n}{j} \le \exp(n \cdot h(k/n)).
  \]
\end{proposition}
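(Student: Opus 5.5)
Let $\mathcal{F}$ be the family of all subsets of $[n] = \{1,\dotsc,n\}$ of size at most $k$, so that $|\mathcal{F}| = \sum_{j=0}^k \binom{n}{j}$. Let $S$ be a uniformly random element of $\mathcal{F}$, and encode it by its indicator vector $(X_1, \dotsc, X_n) \in \{0,1\}^n$, where $X_i = \mathbf{1}[i \in S]$. The map $S \mapsto (X_1,\dotsc,X_n)$ is a bijection onto its image, so \cref{fact:entropy-uniform} gives
\[
\log |\mathcal{F}| = H(S) = H(X_1, \dotsc, X_n).
\]
By subadditivity (\cref{fact:entropy-subadditivity}), this is at most $\sum_{i=1}^n H(X_i) = \sum_{i=1}^n h(p_i)$, where $p_i \coloneqq \Pr(i \in S)$. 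It therefore suffices to show that $\sum_i h(p_i) \le n\,h(k/n)$.

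For this I will use two properties of $h$. First, $h$ is concave on $[0,1]$, since $h''(p) = -1/(p(1-p)) < 0$. Second, $h$ is increasing on $[0,1/2]$. By Jensen's inequality (concavity),
\[
\frac1n\sum_i h(p_i) \le h(\bar p), \qquad \text{where } \bar p = \frac1n \sum_i p_i .
\]
By linearity of expectation, $\sum_i p_i = \Ex|S| \le k$, so $\bar p \le k/n \le 1/2$. Monotonicity of $h$ on $[0,1/2]$ then yields $h(\bar p) \le h(k/n)$. Combining the displays and exponentiating gives the claim.

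The only step needing care is this last one. The hypothesis $k \le n/2$ is used exactly there: it places both $\bar p$ and $k/n$ in the range where $h$ is increasing. By symmetry one could instead note that every $p_i$ is the same, since $\mathcal{F}$ is invariant under permutations of $[n]$. This makes Jensen's inequality unnecessary, because $p_i = \Ex|S|/n \le k/n$ for every $i$. I will present the symmetric version, as it is shorter, and remark that concavity covers the general case. The concavity and monotonicity of $h$ are routine calculus facts, verified by differentiating.
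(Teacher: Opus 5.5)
Your proposal is correct and follows the paper's proof: a uniformly random subset of size at most $k$, subadditivity to reduce to $\sum_i h(\Ex[X_i])$, symmetry to get $\Ex[X_i] \le k/n$ for each $i$, and monotonicity of $h$ on $[0,1/2]$. Your Jensen-based remark for the non-symmetric case is also valid, but the paper does not need it.
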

\begin{proof}
  Fix integers $k$ and $n$ satisfying $0 \le k \le n$ and let $\Sigma$ denote the left-hand side of the asserted inequality.
  Denote by $(X_1, \dotsc, X_n) \in \{0,1\}^n$ the characteristic function of the uniformly random subset of $\br{n} \coloneqq \{1, \dotsc, n\}$ of cardinality at most $k$.
  By \cref{fact:entropy-uniform,fact:entropy-subadditivity} and the definition of $h$,
  \[
    \log \Sigma = H(X_1, \dotsc, X_n) \le \sum_{i=1}^n H(X_i) = \sum_{i=1}^n h(\Ex[X_i]).
  \]
  By symmetry, for each $i$,
  \[
    n \cdot \Ex[X_i] = \sum_{j=1}^n \Ex[X_j] = \Ex\left[\sum_{j=1}^n X_j\right] \le k,
  \]
  which implies that
  \[
    h(\Ex[X_i]) \le \sup\{h(p) : 0 \le p \le k/n\}.
  \]
  Assume now that $k \le n/2$.
  The claimed inequality follows after we observe that $h'(p) = \log ((1-p)/p)$ for all $p \in (0,1)$, which means that $h$ is increasing on $[0,1/2]$.
\end{proof}

\subsection{Relative entropy.}

Entropy is a special case of the much more general concept of relative entropy, first introduced by Kullback and Leibler~\cite{KulLei51}.
Given two probability distributions $P$ and $Q$ defined on the same finite set $\mathcal{X}$ and such that $Q$ is absolutely continuous\footnote{Recall that this means that $Q(x) = 0$ for every $x \in \mathcal{X}$ such that $P(x) = 0$} with respect to $P$ (which we will denote from now on by $Q \ll P$) we define the \emph{relative entropy} of $Q$ from $P$ (also called the \emph{Kullback--Leibler divergence} of $Q$ from $P$) by
\[
  \DKL{Q}{P} \coloneqq \sum_{x \in \mathcal{X}} Q(x) \log \frac{Q(x)}{P(x)}.
\]
In the sequel, we shall often identify an $\mathcal{X}$-valued random variable $X$ with its probability distribution, denoted by $\mathcal{L}(X)$.
In particular, $\DKL{X}{Y}$ is nothing else than  $\DKL{\mathcal{L}(X)}{\mathcal{L}(Y)}$.

An information-theoretic motivation for the above definition is based on the notion of cross-entropy.
One of the main findings of the celebrated work of Shannon~\cite{Sha48} is that the ideal lossless coding scheme optimised for outcomes of a discrete random variable $X$ uses $H(X)$ units of information on average.
The \emph{cross-entropy} of $X$ relative to another random variable $Y$ taking the same set of values, which we will denote here by $H(X ; Y)$, is the expected number of units of information used by the coding scheme optimised for $Y$ while encoding outcomes of $X$; in other words,
\[
  H(X ; Y) \coloneqq - \sum_{x \in \mathcal{X}} \Pr(X = x) \log\Pr(Y = x).
\]
Now, the relative entropy of $X$ from $Y$ is the difference between the cross-entropy $H(X;Y)$ and the real entropy of $X$, that is,
\begin{equation}
  \label{eq:relative-cross-entropy}
  \DKL{X}{Y} = H(X;Y) - H(X).
\end{equation}
In other words, $\DKL{X}{Y}$ is the average excess amount of information wasted by using the coding scheme optimised for $Y$ in order to encode the outcomes of $X$.
Finally, let $U_{\mathcal{X}}$ denote the uniformly chosen random element of a finite set $\mathcal{X}$.
It is easily checked that every $\mathcal{X}$-valued random variable $X$ satisfies $H(X ; U_{\mathcal{X}}) = \log |\mathcal{X}|$, and thus \eqref{eq:relative-cross-entropy} implies that
\begin{equation}
  \label{eq:entropy-minus-relative-entropy}
  H(X) = \log|\mathcal{X}| - \DKL{X}{U_{\mathcal{X}}},
\end{equation}
which explains why relative entropy generalises the notion of entropy.

The key property of relative entropy is that it is always nonnegative.

\begin{fact}
  \label{fact:relative-entropy-nonnegative}
  For every pair $P, Q$ of distributions (on a finite set $\mathcal{X}$) that satisfy $Q \ll P$, we have
  \[
    \DKL{Q}{P} \ge 0.
  \]
  Moreover, equality holds if and only if $P = Q$.
\end{fact}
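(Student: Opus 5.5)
The plan is to prove this by Jensen's inequality for the concave function $\log$, that is, Gibbs' inequality, taking care with the supports. First note a convention issue. In the definition, $Q \ll P$ means $Q(x)=0$ whenever $P(x)=0$, so the sum defining $\DKL{Q}{P}$ only involves points with $P(x)>0$. Terms with $Q(x)=0$ vanish by the convention $0\log 0=0$. Let $S \coloneqq \{x \in \mathcal{X} : Q(x)>0\}$; then $P(x)>0$ for every $x\in S$.

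Next, write
\[
  -\DKL{Q}{P} = \sum_{x\in S} Q(x)\log\frac{P(x)}{Q(x)}.
\]
This is the expectation of $\log(P(X)/Q(X))$, where $X$ is distributed according to $Q$ and hence lies in $S$ almost surely. Rather than invoke Jensen abstractly, I will use the elementary bound $\log t \le t-1$, valid for all $t>0$ with equality if and only if $t=1$. It follows from concavity of $\log$, or from the fact that $t-1-\log t$ has derivative $1-1/t$ and so is minimised at $t=1$. Applying it termwise gives
\[
  -\DKL{Q}{P} \le \sum_{x\in S} Q(x)\Bigl(\frac{P(x)}{Q(x)}-1\Bigr) = \sum_{x\in S}P(x) - 1 \le 0,
\]
since $\sum_{x\in S}P(x)\le \sum_{x\in\mathcal{X}}P(x)=1$.

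For the equality case, suppose $\DKL{Q}{P}=0$. Then both inequalities above are equalities. The first forces $P(x)=Q(x)$ for every $x\in S$, since each term has positive weight $Q(x)$ and the bound is strict unless $t=1$. The second forces $P(\mathcal{X}\setminus S)=0$. Together these give $P=Q$ on all of $\mathcal{X}$. The converse is trivial, since $P=Q$ makes every logarithm vanish.

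The only delicate point is this bookkeeping with supports. Because of the stated direction of absolute continuity, one must check that no term has $P(x)=0<Q(x)$ (which is exactly what the hypothesis provides) and that the mass of $P$ outside $S$ is accounted for in the equality case. Everything else is routine.
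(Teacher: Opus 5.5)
Your proof is correct and is essentially the paper's argument. The paper applies $\log y \ge 1 - 1/y$ with $y = Q(x)/P(x)$ termwise over the support of $Q$, which is your bound $\log t \le t-1$ with $t = 1/y$. It then discards the $P$-mass outside that support and reads off the equality case from the strictness of both steps, exactly as you do.
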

\begin{proof}
  Since $\log y \ge 1 - 1/y$ for all $y \in (0,\infty)$, and the inequality is strict unless $y = 1$, we have
  \[
    \DKL{Q}{P} = \sum_{\substack{x \in \mathcal{X} \\ Q(x) > 0}} Q(x) \log \frac{Q(x)}{P(x)} \ge \sum_{\substack{x \in \mathcal{X} \\ Q(x) > 0}} Q(x) \left(1 - \frac{P(x)}{Q(x)}\right) \ge \sum_{x \in \mathcal{X}}Q(x) - \sum_{x \in \mathcal{X}}P(x) = 0
  \]
  and equality holds throughout if and only if $P(x) = Q(x)$ for all $x \in \mathcal{X}$.
\end{proof}

Observe that \cref{fact:relative-entropy-nonnegative}, together with \eqref{eq:entropy-minus-relative-entropy}, implies \cref{fact:entropy-uniform}.
Interestingly, \cref{fact:monotonicity-of-conditional-entropy,fact:entropy-subadditivity,fact:conditional-entropy-subadditivity} can also be deduced from nonnegativity of relative entropy between carefully chosen pairs of distributions.
For example, \cref{fact:entropy-subadditivity} is an immediate consequence of the following identity, where we write $Y \otimes Z$ for the random vector whose coordinates are independent and have the same distributions as $Y$ and $Z$, respectively:
\begin{equation}
  \label{eq:subadditivity-via-DKL}
  H(X_1) + \dotsb + H(X_n) - H(X_1, \dotsb, X_n) = \DKL{(X_1, \dotsc, X_n)}{X_1 \otimes \dotsb \otimes X_n}.
\end{equation}
We state one more useful consequence of~\cref{fact:relative-entropy-nonnegative}, commonly known as the \emph{data processing inequality}.
Roughly speaking, it states that applying the same deterministic transformation to a pair of random variables cannot increase their relative entropy.
\begin{fact}
  \label{fact:data-processing}
  Suppose that random variables $X$ and $Y$ take values in the same finite set $\mathcal{X}$ and satisfy $\mathcal{L}(X) \ll \mathcal{L}(Y)$.
  For every set $\mathcal{T}$ and every function $T \colon \mathcal{X} \to \mathcal{T}$, we have
  \[
    \DKL{T(X)}{T(Y)} \le \DKL{X}{Y}.
  \]
\end{fact}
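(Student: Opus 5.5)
Note first that the absolute-continuity hypothesis $\mathcal{L}(X) \ll \mathcal{L}(Y)$ means $\Pr(X = x) = 0$ whenever $\Pr(Y = x) = 0$; this is the condition under which $\DKL{X}{Y}$ is well defined. I will prove the inequality by writing the difference $\DKL{X}{Y} - \DKL{T(X)}{T(Y)}$ as an average of relative entropies between conditional distributions, and then invoking \cref{fact:relative-entropy-nonnegative}. This is the usual chain rule for relative entropy, specialised to the pair $(T(X), X)$, where $X$ determines $T(X)$.

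\emph{Step 1: the pushforward is absolutely continuous.} For $t \in \mathcal{T}$ write $P_X(t) \coloneqq \Pr(T(X) = t)$ and $P_Y(t) \coloneqq \Pr(T(Y) = t)$. If $P_Y(t) = 0$, then $\Pr(Y = x) = 0$ for every $x \in T^{-1}(t)$. By hypothesis $\Pr(X = x) = 0$ for each such $x$, so $P_X(t) = 0$. Hence $\mathcal{L}(T(X)) \ll \mathcal{L}(T(Y))$ and the left-hand side makes sense. We may replace $\mathcal{T}$ by the finite set $T(\mathcal{X})$.

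\emph{Step 2: decompose along fibres.} Every $x$ with $\Pr(X = x) > 0$ lies in the fibre $T^{-1}(t)$ of some $t$ with $P_X(t) > 0$, and then also $P_Y(t) > 0$. For such $t$ and each $x \in T^{-1}(t)$ we factor
\[
\frac{\Pr(X = x)}{\Pr(Y = x)} = \frac{P_X(t)}{P_Y(t)} \cdot \frac{\Pr(X = x \mid T(X) = t)}{\Pr(Y = x \mid T(Y) = t)}.
\]
Take logarithms, multiply by $\Pr(X = x) = P_X(t)\Pr(X = x \mid T(X) = t)$, and sum over $x$ fibre by fibre. This gives
\[
\DKL{X}{Y} = \DKL{T(X)}{T(Y)} + \sum_{t:\,P_X(t)>0} P_X(t)\, \DKL{X^{(t)}}{Y^{(t)}},
\]
where $X^{(t)}$ and $Y^{(t)}$ are $X$ and $Y$ conditioned on $T(X) = t$ and $T(Y) = t$ respectively. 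Both are distributions on the fibre $T^{-1}(t)$.

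\emph{Step 3: conclude.} For each such $t$, the conditional laws inherit absolute continuity: if $\Pr(Y = x \mid T(Y) = t) = 0$ then $\Pr(Y = x) = 0$, so $\Pr(X = x) = 0$. Thus each term $\DKL{X^{(t)}}{Y^{(t)}}$ is well defined. By \cref{fact:relative-entropy-nonnegative} each such term is nonnegative, so the whole sum is nonnegative and the claimed inequality follows.

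The only delicate point is bookkeeping with zero probabilities: terms with $\Pr(X = x) = 0$ vanish by the convention $0 \log 0 = 0$, and Step 1 guarantees that no division by zero occurs. Otherwise the argument is a direct computation.
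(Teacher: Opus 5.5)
Your proof is correct and is essentially the paper's argument. The paper writes $\DKL{X}{Y} - \DKL{T(X)}{T(Y)}$ as $\DKL{X}{P}$ for the single distribution $P(x) = \Pr(T(X)=T(x)) \cdot \Pr(Y = x \mid T(Y) = T(x))$ and applies \cref{fact:relative-entropy-nonnegative} once; after cancelling the factor $\Pr(T(X)=T(x))$, this $\DKL{X}{P}$ is exactly your weighted sum $\sum_t \Pr(T(X)=t)\, \DKL{X^{(t)}}{Y^{(t)}}$, so you have simply applied nonnegativity fibre by fibre rather than once to the glued distribution (with somewhat more careful zero-probability bookkeeping than the paper).
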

\begin{proof}
  The claimed inequality follows as
  \begin{equation}
    \label{eq:data-processing}
    \DKL{X}{Y} - \DKL{T(X)}{T(Y)} = \sum_{x \in \mathcal{X}} \Pr(X = x) \log\frac{\Pr(X = x)\Pr(T(Y)=T(x))}{\Pr(Y=x)\Pr(T(X)=T(x))}
  \end{equation}
  and it is easily checked that the function $P \colon \mathcal{X} \to [0,\infty)$ defined by
  \[
    P(x) \coloneqq \frac{\Pr(T(X)=T(x))}{\Pr(T(Y)=T(x))} \cdot \Pr(Y = x)
  \]
  is a probability distribution satisfying $\mathcal{L}(X) \ll P$ and thus \eqref{eq:data-processing} is equal to the nonnegative quantity $\DKL{X}{P}$.
\end{proof}

\subsection{Organisation.}
\label{sec:organisation}

In the remainder of the article, we attempt to survey the many uses of entropy methods in extremal and probabilistic combinatorics.
We organise the surveyed papers around common themes and conceptual threads, aiming to maintain a chronological order: the chain rule with randomised order of conditioning (\Cref{sec:randomised-chain-rule}), Shearer's inequality (\Cref{sec:Shearer-inequality}), constructing random graph homomorphisms with large entropy (\Cref{sec:Sidorenko}), Pinsker's inequality and the interplay between entropy and independence (\Cref{sec:entropy-independence}), the recent breakthrough on the union-closed sets conjecture (\Cref{sec:union-closed}), and the very recent entropy-based approach to Turán-type problems (\Cref{sec:Turan-via-entropy}).

\section{Randomised chain rule.}
\label{sec:randomised-chain-rule}

One of the most striking and influential applications of entropy in combinatorics is the beautiful proof of the following conjecture of Minc~\cite{Min63} due to Jaikumar Radhakrishnan~\cite{Rad97}.

\begin{conjecture}[Minc]
  Suppose that $M$ is an $n \times n$ matrix with all entries in $\{0,1\}$ whose row sums are $d_1, \dotsc, d_n$.
  Then, the permanent of $M$ is at most $\prod_{i=1}^n (d_i!)^{1/d_i}$.
\end{conjecture}

Minc's conjecture was proved by Brégman~\cite{Bre73} and, a few years later, Schrijver~\cite{Sch78} found a shorter proof (whose randomised version is presented in~\cite[Chapter~2]{AloSpe16}).
However, it is the beautiful entropy-based argument due to Radhakrishnan that is considered the `book' proof of this result (see~\cite[Chapter~37]{AigZie18}).
The core idea of Radhakrishnan's argument, which we will term here the \emph{randomised chain rule}, proved to be extremely flexible and it has been adapted by a great number of later works.
Notably, Cuckler and Kahn~\cite{CucKah09-UB} used this method to prove an upper bound on the number of perfect matchings and Hamilton cycles in a graph, which they also matched from below~\cite{CucKah09-LB};
their results have recently been generalised to hypergraphs~\cite{DaiDivKel,Kah23,KwaSafWan}.
In another notable application of the randomised chain rule, Linial and Luria prove upper bounds on the number of Steiner triple systems~\cite{LinLur13} and high-dimensional permutations~\cite{LinLur14};
the former bound was later extended to $(n,q,r,\lambda)$-designs, for arbitrary fixed $q$, $r$, $\lambda$ and large $n$, by Keevash~\cite{Kee18}, whose remarkable work~\cite{Kee} supplies matching lower bounds.
Further applications of the randomised chain rule include~\cite{BoyDasSza20,ChrDraGirHurMicMuy,CutRad11,Kwa20,Lur,Sim23,HofPenPol22}.

Consider the problem of estimating the entropy of a random vector $X = (X_i)_{i \in I} \in A^I$, where $A$ and $I$ are two finite sets.
Given a linear order $\prec$ on the set of indices $I$ and writing $X_{\prec i}$ as a shorthand for the vector $(X_j)_{j \prec i}$, we may use the chain rule (\cref{fact:chain-rule}) to conclude that
\begin{equation}
  \label{eq:random-chain-rule}
  H(X) = \sum_{i \in I} H(X_i \mid X_{\prec i}).
\end{equation}
Further, given an $x \in A^I$ such that $\Pr(X=x)$ is nonzero, write $X_i^{\prec,x}$ to denote the variable $X_i$ conditioned on the event that $X_{\prec i} = x_{\prec i}$.
By the definition of conditional entropy, we have
\begin{equation}
  \label{eq:conditional-entropy-random-chain-rule}
  H(X_i \mid X_{\prec i}) = \Ex[H(X_i^{\prec,x})],
\end{equation}
where the expectation averages over $x$ that is sampled according to $X$.
The key realisation that underlies the argument of Radhakrishnan is that both \eqref{eq:random-chain-rule} and \eqref{eq:conditional-entropy-random-chain-rule} hold also for a random ordering $\prec$ and therefore
\begin{equation}
  \label{eq:Ex-random-chain-rule}
  H(X) = \sum_{i \in I} \Ex[H(X_i^{\prec,x})],
\end{equation}
where the expectation averages over both $x$ and the random ordering $\prec$.
Crucially, even though we obtained~\eqref{eq:Ex-random-chain-rule} by first averaging over $x$ (randomness inherent in the problem) and then over $\prec$ (external randomness we injected to the argument), we may now attempt to evaluate each term in the right-hand side of~\eqref{eq:Ex-random-chain-rule} by switching the order of these two expectations.

In a typical application of this scheme, the random vector $X$ is a uniformly random element of some $\mathcal{X} \subseteq A^I$, so that $H(X) = \log |\mathcal{X}|$, and the entropy $X_i^{\prec,x}$ is bounded from above by the logarithm of the size of the set $A_i^{\prec,x}$ of all possible values that this variable can assume, that is,
\[
  A_i^{\prec,x} \coloneqq \{y_i : y \in \mathcal{X} \text{ and } y_{\prec i} = x_{\prec i}\}.
\]
This leads to the following upper bound on the size of $\mathcal{X}$, first stated in this general form as~\cite[Lemma~4]{PalPal22}.

\begin{lemma}
  \label{lemma:randomised-chain-rule}
  Let $A$ and $I$ be finite sets and let $\prec$ be a random ordering of $I$.
  For every nonempty $\mathcal{X} \subseteq A^I$,
  \[
    \log |\mathcal{X}| \le \sum_{i \in I} \Ex\left[ \log |A_i^{\prec,x}|\right],
  \]
  where the expectation averages over both $\prec$ and a uniformly random $x \in \mathcal{X}$.
\end{lemma}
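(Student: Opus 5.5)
Let $X$ be a uniformly random element of $\mathcal{X}$, so that $H(X) = \log|\mathcal{X}|$ by \cref{fact:entropy-uniform}. The plan is to fix the ordering first, apply the chain rule and the trivial support bound, and only afterwards average over the random ordering.

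First fix a deterministic linear order $\prec$ on $I$. The chain rule (\cref{fact:chain-rule}), applied to the coordinates of $X$ listed in the order $\prec$, gives \eqref{eq:random-chain-rule}. Unfolding the definition of conditional entropy gives \eqref{eq:conditional-entropy-random-chain-rule}, which says that $H(X_i \mid X_{\prec i})$ is the average of $H(X_i^{\prec,x})$ over $x$ sampled according to $X$, that is, uniformly from $\mathcal{X}$. Here $\Pr(X_{\prec i}=x_{\prec i})>0$ automatically, since $x \in \mathcal{X}$.

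Next, for each such $x$, the conditioned variable $X_i^{\prec,x}$ is supported on the set of values $y_i$ with $y \in \mathcal{X}$ and $y_{\prec i}=x_{\prec i}$. This set is exactly $A_i^{\prec,x}$, and it is nonempty because it contains $x_i$. Hence \cref{fact:entropy-uniform} gives $H(X_i^{\prec,x}) \le \log|A_i^{\prec,x}|$. Combining these, for every fixed $\prec$,
\[
  \log|\mathcal{X}| \le \sum_{i\in I}\Ex_x\bigl[\log|A_i^{\prec,x}|\bigr].
\]

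Finally, since this holds for every deterministic ordering, we may take the expectation over the random ordering $\prec$, which is chosen independently of $x$. The left-hand side does not depend on $\prec$. On the right, linearity of expectation lets us combine the two averages into a single expectation over the pair $(\prec,x)$. This is the claimed bound.

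The only point needing care is this last step: the inequality must be established pointwise in $\prec$ before averaging, and $\prec$ must be independent of $x$. Both conditions hold here, so I expect no genuine obstacle; the argument is essentially a formalisation of the discussion preceding the lemma.
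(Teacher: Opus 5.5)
Your proof is correct and follows the same route as the paper's derivation preceding the lemma: you apply the chain rule for a fixed ordering, write each conditional entropy as an average of $H(X_i^{\prec,x})$, use the support bound $H(X_i^{\prec,x}) \le \log|A_i^{\prec,x}|$ from \cref{fact:entropy-uniform}, and average over the independent random ordering. The only difference is cosmetic: you bound before averaging over $\prec$, whereas the paper first averages to get the identity \eqref{eq:Ex-random-chain-rule} and then bounds each term.
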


As an illustration, we prove the following result, which is implicit in the work of Linial and Luria~\cite{LinLur13}.

\begin{theorem}[\cite{LinLur13,Lur}]
  \label{thm:number-of-hypergraph-PM}
  Suppose that $\mathcal{H}$ is a $k$-uniform, $d$-regular, $n$-vertex linear hypergraph.
  The number $M(\mathcal{H})$ of perfect matchings in $\mathcal{H}$ satisfies, as $d^{1/(k-1)} \to \infty$,
  \[
    \log M(\mathcal{H}) \le \frac{n}{k} \cdot \bigl(\log d - (k-1) \cdot (1-o(1))\bigr).
  \]
\end{theorem}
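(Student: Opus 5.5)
We apply \cref{lemma:randomised-chain-rule} with $I$ the vertex set $V$ of $\mathcal{H}$ and $A$ the edge set of $\mathcal{H}$. A perfect matching $x$ is encoded by the vector $(x_v)_{v \in V}$, where $x_v$ is the unique edge of the matching containing $v$. Let $\mathcal{X}$ be the set of such vectors, so $|\mathcal{X}| = M(\mathcal{H})$.

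We take the random ordering $\prec$ to be given by independent labels $t_v$, each uniform in $[0,1]$, ordered increasingly. Fix a perfect matching $x$ and a vertex $v$. Whenever $v$ is not the first vertex (in $\prec$) of its edge $x_v$, the value $x_v$ is already determined by $x_{\prec v}$, so $\log|A_v^{\prec,x}| = 0$. Each edge $e$ of $x$ has exactly one first vertex. Hence
\[
  \log M(\mathcal{H}) \le \Ex\Bigl[\sum_{e \in x} \log|A_{v_e}^{\prec,x}|\Bigr],
\]
where $v_e$ is the first vertex of $e$. It therefore suffices to show, for every fixed $x$ and every fixed $e \in x$, that $\Ex[\log|A_{v_e}^{\prec,x}|] \le \log d - (k-1)(1-o(1))$. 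Summing over the $n/k$ edges of $x$ and then averaging over $x$ gives the theorem.

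To bound $|A_{v_e}^{\prec,x}|$, write $v = v_e$. The set $A_v^{\prec,x}$ is contained in the set of edges $f \ni v$ that are disjoint from every edge $x_u$ with $u \prec v$; that is, $f$ avoids all matching edges that have already been revealed. The edge $e$ itself always qualifies. Fix another edge $f \ni v$, $f \neq e$. Its $k-1$ other vertices $w$ lie in matching edges $x_w$. By linearity, $x_w \ne e$, and the edges $x_w$ are pairwise distinct. (Indeed, if $x_w = x_{w'}$ for $w \neq w'$, then $f$ and $x_w$ would share the two vertices $w, w'$.) The edge $f$ survives only if none of these $k-1$ matching edges has been revealed, i.e.\ each has all of its vertices after $v$.

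Condition on $t_v = t$. Since $v$ is first in $e$, the other labels in $e$ exceed $t$. The $k-1$ edges $x_w$ are disjoint from $e$ and from each other, so their labels are independent of this conditioning. Each $x_w$ has all labels above $t$ with probability $(1-t)^k$. Hence
\[
  \Ex\bigl[|A_v^{\prec,x}| \bigm| t_v = t\bigr] \le 1 + (d-1)(1-t)^{k(k-1)}.
\]

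The main obstacle is that the surviving edges through $v$ are not independent, so we need concavity. By Jensen's inequality,
\[
  \Ex[\log|A_v^{\prec,x}| \mid t] \le \log\bigl(1 + d(1-t)^{k(k-1)}\bigr).
\]
We then integrate against the density of $t_v$ given that $v$ is first in $e$, which is $k(1-t)^{k-1}$. Substitute $s = (1-t)^k$, which is uniform on $[0,1]$. This gives
\[
  \Ex[\log|A_v^{\prec,x}|] \le \int_0^1 \log\bigl(1 + d s^{k-1}\bigr)\,ds.
\]
On the range $d s^{k-1} \ge 1$, i.e.\ $s \ge d^{-1/(k-1)}$, the integrand is $\log d + (k-1)\log s + O(1/(d s^{k-1}))$. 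The remaining range $s < d^{-1/(k-1)}$ has length $d^{-1/(k-1)}$ and contributes $O(d^{-1/(k-1)})$. Since $\int_0^1 \log s\,ds = -1$, the integral equals $\log d - (k-1) + o(1)$ as $d^{1/(k-1)} \to \infty$. The error terms are $O(d^{-1/(k-1)}\log d)$ and $O(d^{-1/(k-1)})$, and we must verify that these are $o(1)$ relative to $k-1$; this is exactly where the hypothesis $d^{1/(k-1)} \to \infty$ is used.
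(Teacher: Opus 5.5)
Your proposal is correct and is essentially the paper's proof. It uses the same application of \cref{lemma:randomised-chain-rule} with a random ordering generated by independent uniform labels, and the same observation that an edge $f \neq x_v$ through $v$ survives only if the $k-1$ matching edges meeting $f \setminus \{v\}$ (distinct by linearity) all come after $v$, giving the exponent $k(k-1)$. It then applies Jensen's inequality conditionally on the label of $v$ and on $v$ being first in $x_v$, which yields the same per-vertex bound $\frac{1}{k}\int_0^1 \log(1+du^{k-1})\,du$.

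The differences are cosmetic. You group vertices by matching edges rather than using the paper's indicator $Z_v^{\prec,x}$. You estimate the integral by splitting at $\delta = d^{-1/(k-1)}$ instead of using $\log(\delta^{k-1}+u^{k-1}) \le (k-1)\log(u+\delta)$. The error you left to verify is fine, since $d^{-1/(k-1)}\log d = (k-1)\,\delta\log(1/\delta) = o(k-1)$.
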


Since a Steiner triple system on $\br{n}$ is nothing else than a perfect matching in the (linear) $3$-uniform hypergraph with vertex set $E(K_n)$ whose hyperedges are the edge sets of triangles of $K_n$, \cref{thm:number-of-hypergraph-PM} implies that there are at most
\[
  \exp\left(\frac{1}{3} \binom{n}{2} \cdot \bigl(\log (n-2) - 2 + o(1)\bigr)\right) = \left((1+o(1)) \cdot \frac{n}{e^2}\right)^{n^2/6}
\]
such Steiner triple systems; this estimate is the main result of~\cite{LinLur13}.
Our presentation here will closely follow Luria~\cite{Lur}, who proved \cref{thm:number-of-hypergraph-PM} under the weaker assumption that $\Delta_2(\mathcal{H}) = o(d)$.
We work with the stronger assumption that $\Delta_2(\mathcal{H}) = 1$ in order to avoid a technical complication in the final part of Luria's argument.

\begin{proof}[Proof of~\cref{thm:number-of-hypergraph-PM}]
  Denote the vertex and the edge sets of $\mathcal{H}$ by $V$ and $E$, respectively.
  Let $\mathcal{X}$ denote the set of perfect matchings in $\mathcal{H}$, each of which can be naturally viewed as a vector in $E^V$ (whose $v$-coordinate is the unique edge of the matching that contains $v$).
  Fix an ordering $\prec$ of $V$, a perfect matching $x$, and a vertex $v \in V$.
  Clearly, $A_v^{\prec,x}$ always contains the edge $x_v$.
  The key observation, however, is that an edge $f \neq x_v$ can belong to the set $A_v^{\prec,x}$ only if $v$ belongs to $f$ and $f$ is disjoint from $\bigcup_{w \prec v} x_w$.
  A moment of thought reveals that the latter holds if and only if $v$ is the $\prec$-smallest element of $\bigcup_{w \in f} x_w$.

  Let $\prec$ be a uniformly chosen random ordering of $V$.
  An elegant idea of Linial and Luria~\cite{LinLur13} is to generate $\prec$ by first assigning to each $v \in V$ a uniformly random  $\tau_v \in [0,1]$ and order the vertices according to the resulting function $\tau \colon V \to [0,1]$ (which is injective with probability one) by letting $v \prec w$ whenever $\tau_v > \tau_w$.
  This enables us to apply Jensen's inequality in the following way.
  Letting $Z_v^{\prec,x}$ be the indicator of the event $v \preceq x_v$, we have
  \begin{equation}
    \label{eq:Ex-Avprecx}
    \Ex[\log|A_v^{\prec,x}|] = \Ex[\Ex[\log |A_v^{\prec,x}| \mid \tau_v, x, Z_v^{\prec,x}]] \le \Ex[\log \Ex[|A_v^{\prec,x}| \mid \tau_v, x, Z_v^{\prec,x}]].
  \end{equation}
  As it is easy to check that, for all $v \in V$ and $W \subseteq V$,
  \[
    \Pr(v \preceq W \mid \tau_v) = \tau_v^{|W \setminus \{v\}|},
  \]
  we have, for every $f \in E \setminus \{x_v\}$ that contains $v$,
  \[
    \Pr(v \preceq x_v \mid \tau_v, x) = \tau_v^{k-1}
    \qquad
    \text{and}
    \qquad
    \Pr(v \preceq \bigcup_{w \in f} x_w \mid \tau_v, x) = \tau_v^{|\{x_w : w \in f\}| \cdot k - 1} = \tau_v^{k^2-1},
  \]
  where the final equality holds thanks to our assumption that $\mathcal{H}$ is linear (and thus every edge $f$ that is not in the matching $x$ intersects $k$ edges of $x$).
  We may conclude that
  \[
    \Ex[|A_v^{\prec,x}| \mid \tau_v, x, Z_v^{\prec,x}] \le 1 + Z_v^{\prec,x} \cdot  d \tau_v^{k(k-1)}.
  \]
  Substituting this estimate into~\eqref{eq:Ex-Avprecx}, we obtain
  \[
    \Ex[\log |A_v^{\prec,x}|]
    \le \Ex[\tau_v^{k-1} \cdot \log(1 + d \tau_v^{k(k-1)})] = \int_0^1 t^{k-1} \log(1+d t^{k(k-1)})\,dt
    = \frac{1}{k} \cdot \int_0^1 \log(1+d u^{k-1})\,du.
  \]
  Finally, letting $\delta \coloneqq d^{-1/(k-1)}$, we have
  \begin{multline*}
    \int_0^1 \log(1+du^{k-1}) \, du = \log d + \int_0^1 \log(\delta^{k-1}+u^{k-1}) \, du \le \log d + (k-1) \int_0^1 \log(u + \delta) \, du \\
    = \log d + (k-1) \cdot \bigl((1+\delta)\log(1+\delta)-\delta \log \delta - 1\bigr).
  \end{multline*}
  Since $(1+\delta)\log(1+\delta)-\delta\log \delta \to 0$ as $\delta \to 0$, the claimed upper bound on $M(\mathcal{H})$ follows from~\cref{lemma:randomised-chain-rule}.
\end{proof}

\section{Shearer's inequality.}
\label{sec:Shearer-inequality}

Consider an arbitrary vector $X$ of discrete random variables whose coordinates are indexed by the elements of a finite set $V$.
The subadditivity property of entropy (\cref{fact:entropy-subadditivity}) implies that, for every partition $\mathcal{P}$ of $V$, we have
\[
  H(X) \le \sum_{W \in \mathcal{P}} H(X_W),
\]
where we write $X_W \coloneqq (X_v)_{v \in W}$ for the projection of $X$ onto the coordinates in $W$.
The following remarkable generalisation of this inequality was proved in 1978 by Shearer, although it first appeared in print only several years later~\cite{ChuGraFraShe86}.

\begin{lemma}[Shearer's inequality]
  \label{lemma:Shearer}
  Suppose that $\mathcal{F}$ is a hypergraph on a finite set $V$ with minimum degree at least $t$.
  Then, for every discrete random vector $X$ whose coordinates are indexed by $V$, we have
  \[
    H(X) \le \frac{1}{t} \sum_{W \in \mathcal{F}} H(X_W).
  \]
\end{lemma}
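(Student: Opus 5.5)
We will prove Shearer's inequality by the standard chain-rule argument.

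Fix an arbitrary linear order on $V$, say $V = \{v_1, \dotsc, v_n\}$, and write $X_{<v}$ for the vector $(X_u)_{u < v}$. By the chain rule (\cref{fact:chain-rule}),
\[
  H(X) = \sum_{v \in V} H(X_v \mid X_{<v}).
\]
Next, for each $W \in \mathcal{F}$, we apply the chain rule to $X_W$, ordering the coordinates of $W$ by the same global order:
\[
  H(X_W) = \sum_{v \in W} H\bigl(X_v \mid (X_u)_{u \in W,\, u < v}\bigr).
\]
Since $\{u \in W : u < v\} \subseteq \{u \in V : u < v\}$, conditioning on more variables can only decrease entropy (\cref{fact:monotonicity-of-conditional-entropy}). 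Hence every summand here is at least $H(X_v \mid X_{<v})$, and so
\[
  H(X_W) \ge \sum_{v \in W} H(X_v \mid X_{<v}).
\]

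Summing this over $W \in \mathcal{F}$ and exchanging the order of summation gives
\[
  \sum_{W \in \mathcal{F}} H(X_W) \ge \sum_{v \in V} d_{\mathcal{F}}(v)\, H(X_v \mid X_{<v}),
\]
where $d_{\mathcal{F}}(v)$ is the number of $W \in \mathcal{F}$ containing $v$. (If $\mathcal{F}$ is a multiset of sets, the argument is unchanged.) Each conditional entropy is nonnegative and each $d_{\mathcal{F}}(v) \ge t$. The right-hand side is therefore at least
\[
  t \sum_{v \in V} H(X_v \mid X_{<v}) = t\, H(X),
\]
and dividing by $t$ gives the claim.

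The one step needing care is the monotonicity step. The coordinates of each $X_W$ must be ordered consistently with the global order, so that the conditioning set in the chain rule for $X_W$ is indeed a subset of the one for $X$. Once that is fixed, the rest is bookkeeping.

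Nonnegativity of conditional entropy is also used, to pass from degree $d_{\mathcal{F}}(v)$ to the lower bound $t$. It follows directly from the definition, since each $H(X^y) \ge 0$.
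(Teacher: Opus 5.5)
Your proof is correct and is essentially identical to the paper's argument, which the paper attributes to Llewellyn and Radhakrishnan. You fix an ordering, apply the chain rule to each $X_W$ in the induced order, and use monotonicity of conditional entropy to compare with $H(X_v \mid X_{<v})$; then you sum over $\mathcal{F}$ and use $\deg_{\mathcal{F}} v \ge t$ together with nonnegativity of conditional entropy, just as the paper does, and your remark about multisets matches the paper's comment on hypergraphs with edge multiplicities.
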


We remark here that the special case $\mathcal{F} = \{V \setminus \{v\} : v \in V\}$ and $t = |V|-1$, which generalises the well-known Loomis--Whitney inequality~\cite{LooWhi49}, had been independently obtained by Te Sun Han~\cite{Han78}.
The following `book' proof of Shearer's inequality was discovered by Llewellyn and Radhakrishnan.

\begin{proof}[Proof of~\cref{lemma:Shearer}]
  Let $\prec$ be an arbitrary ordering of the elements of $V$.
  By \cref{fact:chain-rule,fact:monotonicity-of-conditional-entropy}, for every $W \in \mathcal{F}$, we have
  \[
    H(X_W) = \sum_{v \in W} H(X_v \mid (X_w : w \prec v, w \in W)) \ge \sum_{v \in W} H(X_v \mid X_{\prec v}).
  \]
  Summing the above inequality over all $W \in \mathcal{F}$ yields
  \[
    \sum_{W \in \mathcal{F}} H(X_W) \ge \sum_{v \in V} \deg_{\mathcal{F}}v \cdot H(X_v \mid X_{\prec v}) \ge t \cdot \sum_{v \in V} H(X_v \mid X_{\prec v}) = t \cdot H(X),
  \]
  where the second inequality holds as entropy is nonnegative and the equality is the chain rule (\cref{fact:chain-rule}).
\end{proof}

The following, purely combinatorial, corollary of the general lemma is already sufficient for many applications.

\begin{corollary}
  \label{cor:combinatorial-Shearer}
  Suppose that $\mathcal{F}$ is a hypergraph on a finite set $V$ with minimum degree at least $t$.
  For every finite $A$ and all $\mathcal{X} \subseteq A^V$,
  \[
    |\mathcal{X}|^t \le \prod_{W \in \mathcal{F}} |\mathcal{X}_W|,
  \]
  where $\mathcal{X}_W$ denotes the projection of $\mathcal{X}$ onto the coordinates in $W$.
\end{corollary}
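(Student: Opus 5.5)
The plan is to apply \cref{lemma:Shearer} to a uniformly random element of $\mathcal{X}$ and then convert entropies back into cardinalities. If $\mathcal{X}$ is empty, the left-hand side is $0$ (assuming $t \ge 1$; the case $t = 0$ is trivial since $|\mathcal{X}|^0 = 1$ and each $|\mathcal{X}_W| \ge 1$ when $\mathcal{X} \neq \emptyset$, while the empty case reduces to $1 \le 1$ or $0 \le 0$). So assume $\mathcal{X}$ is nonempty, and let $X$ be a uniformly random element of $\mathcal{X}$, viewed as a random vector with coordinates indexed by $V$.

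By \cref{fact:entropy-uniform}, $H(X) = \log|\mathcal{X}|$. For each $W \in \mathcal{F}$, the projection $X_W$ takes values only in $\mathcal{X}_W$, which is a finite set. Hence, again by \cref{fact:entropy-uniform}, we have $H(X_W) \le \log|\mathcal{X}_W|$. Note that $X_W$ need not be uniform on $\mathcal{X}_W$, so this step is only an inequality, but that is all we need.

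Plugging both facts into \cref{lemma:Shearer} gives
\[
  t \log|\mathcal{X}| = t\,H(X) \le \sum_{W \in \mathcal{F}} H(X_W) \le \sum_{W \in \mathcal{F}} \log|\mathcal{X}_W|.
\]
Exponentiating yields $|\mathcal{X}|^t \le \prod_{W \in \mathcal{F}} |\mathcal{X}_W|$.

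There is no real obstacle here. The only points needing care are the degenerate cases ($\mathcal{X}$ empty, or possibly $\emptyset \in \mathcal{F}$, where $\mathcal{X}_\emptyset$ is a singleton contributing factor $1$ and entropy $0$). Both are consistent with the argument above.
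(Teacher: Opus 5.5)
Your proof is correct and follows the paper's argument: take $X$ uniform on $\mathcal{X}$, use $H(X)=\log|\mathcal{X}|$ and $H(X_W)\le\log|\mathcal{X}_W|$ from \cref{fact:entropy-uniform}, and plug these into \cref{lemma:Shearer}. One small correction to your degenerate-case aside: when $t=0$, $\mathcal{X}=\emptyset$ and $\mathcal{F}\neq\emptyset$, the inequality reads $0^0=1\le 0$, so it fails rather than being trivial; this does not affect your main argument, because $t\ge 1$ is implicit (\cref{lemma:Shearer} divides by $t$).
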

\begin{proof}
  Let $X$ be a uniformly chosen random vector in $\mathcal{X}$.
  By \cref{fact:entropy-uniform,lemma:Shearer},
  \[
    \log|\mathcal{X}| = H(X) \le \frac{1}{t} \cdot \sum_{W \in \mathcal{F}} H(X_W) \le \frac{1}{t} \cdot \sum_{W \in \mathcal{F}} \log |\mathcal{X}_W|,
  \]
  as claimed.
\end{proof}

It is worth mentioning that both \cref{lemma:Shearer,cor:combinatorial-Shearer} remain true when we allow the edges of the hypergraph $\mathcal{F}$ to have nonnegative integer multiplicities (in which case, degrees count edges with their multiplicities); the proofs remain unchanged.
A further generalisation of~\cref{cor:combinatorial-Shearer} to weighted hypergraphs was considered by Friedgut~\cite{Fri04}.

One particularly elegant application of \cref{cor:combinatorial-Shearer}, due to Friedgut and Kahn~\cite{FriKah98}, gives an upper bound on the number of copies of a uniform hypergraph $F$ in another hypergraph with a given number of edges.
Their bound generalises the earlier work of Alon~\cite{Alo81}, who obtained the same bound in the case where $F$ is a graph, and is best-possible up to a multiplicative constant that depends only on $F$.
The method of~\cite{FriKah98} was later used in~\cite{JanOleRuc04} to prove strong bounds on the number of copies of a given graph in another graph with given numbers of edges and vertices (see also~\cite{HarMouSam22}).
Finally, we refer the interested reader to the recent~\cite{CohHarMouSam} for a streamlined version of the proof of the main results of~\cite{FriKah98,JanOleRuc04} that avoids the use of Shearer's inequality and linear programming duality.

As an illustration of~\cref{cor:combinatorial-Shearer}, we present here the argument of~\cite{FriKah98} in one special case, obtaining an `asymptotic' version of the famous Kruskal--Katona Theorem.
The \emph{shadow} of a family $\mathcal{H}$ of sets is the family
\[
  \partial \mathcal{H} \coloneqq \{E \setminus \{v\} : v \in E \in \mathcal{H}\}.
\]

\begin{proposition}
  \label{prop:Kruskal-Katona-weak}
  Suppose that $\mathcal{H}$ is a family of $k$-element sets.
  If $|\mathcal{H}| \ge x^k/k!$ for some real $x > 0$, then $|\partial \mathcal{H}| \ge x^{k-1}/(k-1)!$.
\end{proposition}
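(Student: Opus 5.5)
We encode $\mathcal{H}$ as ordered tuples and apply \cref{cor:combinatorial-Shearer} with $t = k-1$. Let $V = \br{k}$ and let $\mathcal{X} \subseteq A^V$ be the set of all orderings of members of $\mathcal{H}$, where $A$ is the ground set. Concretely, $\mathcal{X}$ consists of the vectors $(a_1, \dotsc, a_k)$ with pairwise distinct coordinates such that $\{a_1, \dotsc, a_k\} \in \mathcal{H}$. Then $|\mathcal{X}| = k!\,|\mathcal{H}| \ge x^k$.

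Take $\mathcal{F} = \{V \setminus \{i\} : i \in V\}$. Every $i \in V$ lies in exactly $k-1$ of these sets, so the minimum degree is $t = k-1$. For each $W = V \setminus \{i\}$, the projection $\mathcal{X}_W$ is a set of injective $(k-1)$-tuples whose underlying sets lie in $\partial\mathcal{H}$. Hence $|\mathcal{X}_W| \le (k-1)!\,|\partial\mathcal{H}|$. \Cref{cor:combinatorial-Shearer} then gives
\[
  x^{k(k-1)} \le |\mathcal{X}|^{k-1} \le \prod_{W \in \mathcal{F}} |\mathcal{X}_W| \le \bigl((k-1)!\,|\partial \mathcal{H}|\bigr)^{k}.
\]
Taking $k$-th roots yields $x^{k-1} \le (k-1)!\,|\partial\mathcal{H}|$, which is the claim.

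The only step that needs care is setting up the encoding so that the factorials come out exactly right. Using ordered tuples makes $|\mathcal{X}|$ exactly $k!\,|\mathcal{H}|$ and bounds each projection by $(k-1)!\,|\partial\mathcal{H}|$. If we had projected unordered sets, there would be no coordinate structure to which Shearer's inequality could be applied.

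Two small checks remain. First, projecting $(a_1, \dotsc, a_k)$ by deleting coordinate $i$ gives the tuple obtained by removing $a_i$, and its underlying set $E \setminus \{a_i\}$ lies in $\partial\mathcal{H}$. Second, each $(k-1)$-set in $\partial\mathcal{H}$ has at most $(k-1)!$ orderings. The edge case $k=1$ is trivial: the claim reduces to $|\partial\mathcal{H}| \ge 1$, which holds because $\partial\mathcal{H} = \{\emptyset\}$ whenever $\mathcal{H}$ is nonempty.
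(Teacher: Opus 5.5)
Your proof is correct and is essentially the paper's own argument: encode $\mathcal{H}$ as the set of its $k!\,|\mathcal{H}|$ orderings, apply \cref{cor:combinatorial-Shearer} to the $k$ sets $\br{k}\setminus\{i\}$ with $t=k-1$, and bound each projection by $(k-1)!\,|\partial\mathcal{H}|$. Your separate treatment of $k=1$ is a harmless extra; the paper does not single it out, since the general chain already covers it.
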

\begin{proof}
  Let $A \coloneqq \bigcup \mathcal{H}$ and let $\mathcal{X} \subseteq A^{\br{k}}$ be the family of all ordered sets from $\mathcal{H}$, so that $|\mathcal{X}| = k! \cdot |\mathcal{H}| \ge x^k$.
  Note that, for every $i \in \br{k}$, the projection $\mathcal{X}_{\br{k} \setminus \{i\}}$ comprises only ordered members of $\partial \mathcal{H}$ and thus its cardinality cannot exceed $(k-1)! \cdot |\partial \mathcal{H}|$.
  Consequently, by~\cref{cor:combinatorial-Shearer},
  \[
    x^{k(k-1)} \le |\mathcal{X}|^{k-1} \le \prod_{i \in \br{k}} |\mathcal{X}_{\br{k} \setminus \{i\}}| \le ((k-1)! \cdot |\partial \mathcal{H}|)^k,
  \]
  as claimed.
\end{proof}

The recent work of Chao and Yu~\cite{ChaYu24} presents another entropy-based argument that proves a strengthening of~\cref{prop:Kruskal-Katona-weak}, known as Lovász's formulation of the Kruskal--Katona Theorem, where $x^k/k!$ and $x^{k-1}/(k-1)!$ are replaced by $\binom{x}{k}$ and $\binom{x}{k-1}$, respectively.
The same authors used entropy methods to prove sharp upper bounds on the number of rainbow triangles in a graph with specified numbers of red, green, and blue edges~\cite{ChaYu,ChaYu24}, improving a `vanilla' application of Shearer's inequality by a constant factor.

Another extremely influential application of Shearer's inequality drives the proof of the following upper bound on the number of homomorphisms from a regular bipartite graph to an arbitrary graph (with loops allowed).

\begin{theorem}[\cite{GalTet04,Kah01-HC}]
  \label{thm:Galvin-Tetali}
  For every $d$-regular, $N$-vertex, bipartite graph $G$ and every graph $F$,
  \[
    |\Hom(G,F)|^{1/N} \le |\Hom(K_{d,d},F)|^{1/(2d)}.
  \]
\end{theorem}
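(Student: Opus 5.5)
Let $G$ have bipartition $V = \mathcal{E} \cup \mathcal{O}$, with $|\mathcal{E}| = |\mathcal{O}| = N/2$ since $G$ is $d$-regular. We may assume $\Hom(G,F)$ is nonempty, as otherwise there is nothing to prove. Let $X = (X_v)_{v \in V}$ be a uniformly random element of $\Hom(G,F)$, so that $\log|\Hom(G,F)| = H(X)$ by \cref{fact:entropy-uniform}.

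We first split off the even side. By the chain rule (\cref{fact:chain-rule}),
\[
  H(X) = H(X_{\mathcal{E}}) + H(X_{\mathcal{O}} \mid X_{\mathcal{E}}).
\]
Given $X_{\mathcal{E}}$, each $X_v$ with $v \in \mathcal{O}$ is determined by the constraint set coming from its neighbourhood $N(v) \subseteq \mathcal{E}$. By \cref{fact:conditional-entropy-subadditivity} and \cref{fact:monotonicity-of-conditional-entropy},
\[
  H(X_{\mathcal{O}} \mid X_{\mathcal{E}}) \le \sum_{v \in \mathcal{O}} H(X_v \mid X_{N(v)}).
\]
For $H(X_{\mathcal{E}})$ we apply Shearer's inequality (\cref{lemma:Shearer}) with the hypergraph $\{N(v) : v \in \mathcal{O}\}$ on $\mathcal{E}$. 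Every even vertex lies in exactly $d$ of these sets, so
\[
  H(X_{\mathcal{E}}) \le \frac{1}{d}\sum_{v \in \mathcal{O}} H(X_{N(v)}).
\]
Combining the two bounds gives
\[
  H(X) \le \sum_{v \in \mathcal{O}} \Bigl(\tfrac{1}{d} H(X_{N(v)}) + H(X_v \mid X_{N(v)})\Bigr).
\]

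It remains to bound each summand by $\frac{1}{d}\log|\Hom(K_{d,d},F)|$. Summing over the $N/2$ odd vertices then gives $H(X) \le \frac{N}{2d}\log|\Hom(K_{d,d},F)|$, which is the claim.

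Fix $v \in \mathcal{O}$ and write $p(\mathbf{a}) = \Pr(X_{N(v)} = \mathbf{a})$ for $\mathbf{a} \in V(F)^d$. Let $n(\mathbf{a})$ be the number of common neighbours in $F$ of the entries of $\mathbf{a}$; loops are allowed. Conditioned on $X_{N(v)} = \mathbf{a}$, the variable $X_v$ takes at most $n(\mathbf{a})$ values. Hence the summand is at most
\[
  \sum_{\mathbf{a}} p(\mathbf{a}) \Bigl(\tfrac{1}{d}\log\tfrac{1}{p(\mathbf{a})} + \log n(\mathbf{a})\Bigr)
  = \frac{1}{d}\sum_{\mathbf{a}} p(\mathbf{a}) \log\frac{n(\mathbf{a})^d}{p(\mathbf{a})}.
\]
By Jensen's inequality (concavity of $\log$), equivalently nonnegativity of relative entropy (\cref{fact:relative-entropy-nonnegative}), this is at most
\[
  \frac{1}{d}\log\sum_{\mathbf{a}} n(\mathbf{a})^d.
\]
Finally, $\sum_{\mathbf{a}} n(\mathbf{a})^d$ counts the pairs consisting of a $d$-tuple $\mathbf{a}$ for one side of $K_{d,d}$ and a $d$-tuple of common neighbours of $\mathbf{a}$ for the other side. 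This is exactly $|\Hom(K_{d,d},F)|$, which completes the argument.

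The main obstacle is the decomposition in the second paragraph: one has to choose to apply Shearer only to the even side while conditioning the odd side on its neighbourhoods. This split is what makes each local term depend only on $X_{N(v)}$ and $X_v$, so that the local Jensen step identifies the $K_{d,d}$ count. The remaining steps are routine.
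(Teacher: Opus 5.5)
Your proof is correct and follows the paper's argument: the same chain-rule split $H(X) = H(X_{\mathcal{E}}) + H(X_{\mathcal{O}} \mid X_{\mathcal{E}})$, Shearer's inequality on the even side with the neighbourhood hypergraph, and conditional subadditivity plus monotonicity on the odd side. The only difference is the final local step. You bound $H(X_v \mid X_{N(v)} = \mathbf{a})$ by $\log n(\mathbf{a})$ and apply Jensen to reach $\frac{1}{d}\log\sum_{\mathbf{a}} n(\mathbf{a})^d = \frac{1}{d}\log|\Hom(K_{d,d},F)|$. The paper instead builds a random homomorphism $g^v \in \Hom(K_{d,d},F)$, using $d$ conditionally independent copies of $X_v$ given $X_{N(v)}$, whose entropy equals the summand exactly; the two are equivalent ways of making the same comparison.
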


\Cref{thm:Galvin-Tetali} is a result of Galvin and Tetali~\cite{GalTet04}, but its proof is an adaptation of the method of Kahn~\cite{Kah01-HC}, who obtained an upper bound on the number of independent sets in a bipartite regular graph.
(The method has its roots in Kahn's earlier work with Lawrenz~\cite{KahLaw99}, which studies the number of so-called rank functions on the hypercube, which are in one-to-one correspondence with homomorphisms from the hypercube to $\mathbb{Z}$.)
In fact, Kahn's result is a special case of the above theorem, as there is a well-known bijection between independent sets of $G$ and the set $\Hom(G,\indsetgraph)$.
The strongest result proved in~\cite{GalTet04} is a generalisation of \cref{thm:Galvin-Tetali} to the setting of weighted graph homomorphisms, where the vertices of $F$ are assigned positive real weights via some $\lambda \colon V(F) \to (0, \infty) $ and the weight of a homomorphism $f \in \Hom(G,F)$ is $\prod_{v \in V(G)} \lambda_{f(v)}$;
this more general result is deduced from~\cref{thm:Galvin-Tetali} by relating the sum of weights of all homomorphisms to the number of homomorphisms to an appropriate blowup of $F$.

Various extensions and variations of the entropy-based method of proof \Cref{thm:Galvin-Tetali} were used by many authors not only to give tighter bounds on the number of homomorphisms from $G$ to $F$ under further assumptions on $G$, but also to describe the typical structure of randomly chosen such homomorphism.
Some highlights include: the works of Kahn on the typical structure of a random independent set in the hypercube~\cite{Kah01-HC} and a random homomorphism from the hypercube to $\mathbb{Z}$~\cite{Kah01-RW}, further sharpened by Galvin~\cite{Gal03}, and the number of antichains in the Boolean lattice~\cite{Kah02}; and the works of Engbers and Galvin that give a rough description of the typical structure of a random homomorphism from an arbitrary regular bipartite graph~\cite{EngGal12-bip} as well as a finer description in the case where this graph is a high-dimensional torus~\cite{EgnGal12-tori}.
Several more recent works combine entropy arguments with the method of graph containers to prove extremely precise results on the typical structure of random graph homomorphisms from the hypercube~\cite{KahPar20,LiMcKPar25}, high-dimensional tori~\cite{JenKee23}, the nearest-neighbour graph on $\mathbb{Z}^d$~\cite{PelSpi,PelSpi23}, and bipartite expanders~\cite{KruLiPar}.
Finally, we mention that the recent work~\cite{BalBolNar21} uses the method of proof of \cref{thm:Galvin-Tetali} to give sharp upper bounds on the number of independent sets in certain regular hypergraphs.

\begin{proof}[Proof of \cref{thm:Galvin-Tetali}]
  Let $f \colon V(G) \to V(F)$ be a uniformly chosen random homomorphism from $G$ to $F$, so that $H(f) = \log|\Hom(G,F)|$, and let $V(G) = V_0 \cup V_1$ be an arbitrary bipartition of $G$.
  Writing $f_W$ for the restriction of $f$ to a subset $W \subseteq V(G)$, we may use the chain rule (\cref{fact:chain-rule}) to obtain
  \begin{equation}
    \label{eq:Hf-bipartite-chain-rule}
    H(f) = H(f_{V_0}) + H(f_{V_1} \mid f_{V_0}).
  \end{equation}
  Since every vertex in $V_0$ is a neighbour of exactly $d$ vertices in $V_1$, we may use Shearer's inequality to bound the first term in the right-hand side of~\eqref{eq:Hf-bipartite-chain-rule} as follows:
  \[
    H(f_{V_0}) \le \frac{1}{d} \cdot \sum_{v \in V_1} H(f_{N_v}).
  \]
  Further, using \cref{fact:monotonicity-of-conditional-entropy,fact:conditional-entropy-subadditivity}, we bound the second term in the right-hand side of~\eqref{eq:Hf-bipartite-chain-rule} as follows:
  \[
    H(f_{V_1} \mid f_{V_0}) \le \sum_{v \in V_1} H(f_{v} \mid f_{V_0}) \le \sum_{v \in V_1} H(f_{v} \mid f_{N_v}),
  \]
  where $N_v$ denotes the neighbourhood of $v$ in $G$.
  Substituting these two upper bounds back into~\eqref{eq:Hf-bipartite-chain-rule}, we obtain the key inequality
  \[
    d \cdot H(f) \le \sum_{v \in V_1} \bigl( H(f_{N_v}) + d \cdot H(f_v \mid f_{N_v}) \bigr).
  \]
  The crux of the argument is to realise that each term in the above sum is the entropy of a random homomorphism from the complete bipartite graph $K_{d,d}$ to $F$.
  Indeed, one may define such random $g^v \in \Hom(K_{d,d}, F)$ as follows.
  Denote the two partite sets of $K_{d,d}$ by $A$ and $B$, let $\varphi^v \colon A \to N_v$ be an arbitrary bijection, and let $g_A^v = f_{\varphi^v(A)}$.
  Further, conditioned on $g_A^v$, the vector $g_B^v$ comprises $d$ independent copies of $f_v$ conditioned on $f_{N_v}$.
  Since $f$ is a homomorphism and $B$ is an independent set, the resulting function $g^v$ is also a homomorphism.
  Most importantly, by construction and~\cref{fact:conditional-entropy-subadditivity},
  \[
    H(g^v) = H(g^v_A) + H(g^v_B \mid g^v_A) = H(g^v_A) + \sum_{b \in B} H(g_b^v \mid g_A^v) = H(f_{N_v}) + d \cdot H(f_v \mid f_{N_v}).
  \]
  Since $H(g^v) \le \log |\Hom(K_{d,d}, F)|$ for each $v$, by~\cref{fact:entropy-uniform}, we obtain
  \[
    d \cdot \log|\Hom(G, F)| = d \cdot H(f) \le |V_1| \cdot \log|\Hom(K_{d,d}, F)| = N/2 \cdot \log|\Hom(K_{d,d}, F)|,
  \]
  which is equivalent to the claimed inequality.
\end{proof}

As a final illustration of the usefulness of Shearer's inequality, we present a generalisation of a beautiful entropy-based proof of the edge-isoperimetric inequality for the hypercube discovered by Boucheron, Lugosi, and Massart~\cite{BouLugMas13}.
Our presentation here closely follows~\cite{DisSam25}, where optimal edge-isoperimetric inequalities are proved for a much broader class of graphs.
Given positive integers $m$ and $n$, we denote by $K_m^n$ the Cartesian product of $n$ complete graphs $K_m$, that is, the graph with vertex set $\br{m}^n$ whose edges are pairs of vectors that differ in exactly one coordinate, so that $K_2^n$ is the $n$-dimensional hypercube graph.

\begin{proposition}
  For all integers $m \ge 2$ and $n \ge 1$ and every nonempty $A \subseteq \br{m}^n$, we have
  \[
    e_{K_m^n}(A, A^c) \ge |A| \cdot (m-1)(n - \log_m|A|).
  \]
\end{proposition}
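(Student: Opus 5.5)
Let $X=(X_1,\dotsc,X_n)$ be uniform on $A$, so that $H(X)=\log|A|$. We apply Shearer's inequality (\cref{lemma:Shearer}) with the family $\mathcal{F}=\{\br{n}\setminus\{i\} : i\in\br{n}\}$, in which every coordinate has degree $t=n-1$. This gives
\[
  (n-1)H(X)\le \sum_{i=1}^n H(X_{-i}),
\]
where $X_{-i}$ denotes $X$ with the $i$th coordinate removed. By the chain rule, $H(X)=H(X_{-i})+H(X_i\mid X_{-i})$. Substituting and rearranging yields
\[
  \sum_{i=1}^n H(X_i\mid X_{-i})\le H(X)=\log|A|.
\]
This is Han's inequality, and it is the only entropy input the argument needs.

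Next we express each conditional entropy in terms of edges. For $y\in\br{m}^{n-1}$, let $L_i(y)$ be the line in direction $i$ through $y$, and let $a_i(y)=|A\cap L_i(y)|\in\{0,\dotsc,m\}$. Conditioned on $X_{-i}=y$, the coordinate $X_i$ is uniform on a set of size $a_i(y)$, and $\Pr(X_{-i}=y)=a_i(y)/|A|$. Hence
\[
  H(X_i\mid X_{-i})=\frac{1}{|A|}\sum_y a_i(y)\log a_i(y).
\]
The edges of $K_m^n$ in direction $i$ that leave $A$ inside the line $L_i(y)$ number exactly $a_i(y)(m-a_i(y))$. The whole proof therefore reduces to a one-variable inequality, valid for every integer $a\in\{1,\dotsc,m\}$ (also trivially for $a=0$):
\[
  a(m-a)\ \ge\ (m-1)\,a\,(1-\log_m a), \qquad\text{that is,}\qquad m-a\ \ge\ (m-1)\Bigl(1-\frac{\log a}{\log m}\Bigr).
\]
The left-hand side is linear in $a$ and the right-hand side is convex in $a$, since $-\log a$ is convex. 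The two sides agree at $a=1$ (both equal $m-1$) and at $a=m$ (both equal $0$). A convex function lies below its chord between those two points, and that chord is exactly the left-hand side. So the inequality holds on all of $[1,m]$. Finding this pointwise inequality is the only real obstacle, and the convexity observation disposes of it.

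To finish, we sum over all $i$ and all $y$. The sum $\sum_y a_i(y)$ equals $|A|$ for each $i$, which gives
\begin{align*}
  e(A,A^c)&=\sum_{i=1}^n\sum_y a_i(y)\bigl(m-a_i(y)\bigr)\\
  &\ge (m-1)\sum_{i=1}^n\sum_y a_i(y)\Bigl(1-\frac{\log a_i(y)}{\log m}\Bigr)\\
  &=(m-1)\Bigl(n|A|-\frac{|A|}{\log m}\sum_{i=1}^n H(X_i\mid X_{-i})\Bigr).
\end{align*}
Applying Han's inequality $\sum_{i=1}^n H(X_i\mid X_{-i})\le\log|A|$ to the last line gives
\[
  e(A,A^c)\ge (m-1)|A|\,(n-\log_m|A|),
\]
which is the claimed bound.
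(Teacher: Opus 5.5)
Your proof is correct and is essentially the paper's argument: you count boundary edges line by line, bound $m-k$ below by the affine function $(m-1)(1-\log k/\log m)$ of $\log k$, identify the line-averaged $\log k$ with $H(X_i \mid X_{(i)})$, and finish with Han's inequality. The only differences are cosmetic: the paper packages the linear step as an affine map $\psi$ pulled through the expectation where you sum directly over lines, and you prove the pointwise inequality via convexity and the chord through $a=1$ and $a=m$, which the paper simply asserts.
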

\begin{proof}
  Let $X = (X_1, \dotsc, X_n)$ be a uniformly chosen random vertex of $A$.
  For every $v \in \br{m}^n$ and each $i \in \br{n}$, denote by $v_{(i)}$ the projection of $v$ along the $i$th coordinate, that is, $v_{(i)}=(v_1,\dotsc, v_{i-1}, v_{i+1}, \dotsc, v_n)$. Further, given an $x \in A$, let $A_i(x) \subseteq \br{m}$ denote the support of $X_i$ conditioned on $X_{(i)} = x_{(i)}$.
  Our first observation is that
  \[
    e_{K_m^n}(A,A^c) = \sum_{x \in A} \sum_{i=1}^n (m-|A_i(x)|).
  \]
  Now, denoting by $k_i$ the (random) size of $A_i(X)$, we may rewrite the above identity as
  \[
    e_{K_m^n}(A, A^c) = |A| \cdot \sum_{i=1}^n \Ex[m-k_i].
  \]
  The key idea is to define the affine function $\psi \colon [0, \log m] \to \mathbb{R}$ by $\psi(x) \coloneqq (m-1)(1-x/\log m)$ and observe that $\psi(\log k) \le m-k$ for every $k \in \br{m}$.
  Consequently,
  \[
    e_{K_m^n}(A, A^c) \ge |A| \cdot \sum_{i=1}^n \Ex[\psi(\log k_i)] = |A| \cdot \sum_{i=1}^n \psi\bigl(\Ex[\log k_i]\bigr) = |A| \cdot n \cdot \psi\left(\frac{1}{n} \sum_{i=1}^n \Ex[\log k_i]\right).
  \]
  Since $\Ex[\log k_i]$ is precisely the conditional entropy $H(X_i \mid X_{(i)})$, Shearer's inequality (or even the weaker Han's inequality) implies that
  \[
    \sum_{i=1}^n \Ex[\log k_i] = \sum_{i=1}^n H(X_i \mid X_{(i)}) = n \cdot H(X) - \sum_{i=1}^n H(X_{(i)}) \le H(X) = \log |A|.
  \]
  Since $\psi$ is decreasing, we may conclude that
  \[
    e_{K_m^n}(A, A^c) \ge |A| \cdot n \cdot \psi\bigl((\log |A|)/n\bigr) = |A| \cdot (m-1)(n - \log_m|A|),
  \]
  as claimed.
\end{proof}

\section{Random homomorphisms with large entropy.}
\label{sec:Sidorenko}

In the previous section, we showed how entropy methods can be used to prove upper bounds on the number of homomorphisms from a regular bipartite graph $G$ to an arbitrary graph $F$.
The reason why this is possible is that $\log |\Hom(G,F)|$ may be viewed as the entropy of a uniformly chosen random $f \in \Hom(G,F)$ and, as the proof of \cref{thm:Galvin-Tetali} shows, $H(f)$ can be bounded from above using Shearer's inequality (\cref{lemma:Shearer}).
In principle, we could also prove a lower bound on the number of homomorphisms by supplying a lower bound on the entropy of a uniformly random $f \in \Hom(G,F)$.
However, since the distribution of $f$ can be difficult to analyse, this approach seems intractable.
Kopparty and Rossman~\cite{KopRos11} pioneered the following striking alternative to this idea:
Since the logarithm of $|\Hom(G,F)|$ is bounded from below by the entropy of \emph{any} probability distribution on $\Hom(G,F)$ (by \cref{fact:entropy-uniform}), one may obtain lower bounds on $|\Hom(G,F)|$ by constructing distributions that still have large entropies but exhibit more independence, which makes them easier to analyse.

The basic method of Kopparty and Rossman has been adapted and generalised by multitude of subsequent works, mostly in the context of the notorious conjecture of Erdős and Simonovits~\cite{ErdSim84} and Sidorenko~\cite{Sid93,Sid91}.
Given graphs $F$ and $G$, we define
\[
  t(F, G) \coloneqq |\Hom(F,G)| \cdot v_G^{-v_F};
\]
in other words, $t(F,G)$ is the probability that a random function $f \colon V(F) \to V(G)$ is a homomorphism.
\begin{conjecture}[Sidorenko and Erdős--Simonovits]
  \label{conj:Sidorenko}
  For every bipartite graph $F$ and all $G$,
  \[
    t(F,G) \ge t(K_2, G)^{e_F}.
  \]
\end{conjecture}

Although Conjecture~\ref{conj:Sidorenko} remains open, it has been shown to be true for many special families of $F$.
What makes it relevant to us is that most of the recent progress on the conjecture has relied on generalisations of the entropy-based method of Kopparty and Rossman.
The first to explore these ideas were Li and Szegedy~\cite{LiSze}, closely followed by Kim, Lee, and Lee~\cite{KimLeeLee16}; in both of these works, entropy appears only implicitly.
Having said that, the following later works~\cite{ConKimLeeLee18,ConLee17,Sze} explicitly follow the general scheme outlined above.
Several other works used similar approaches to study a number of related problems.
In particular: Kamčev, Liebanau, and Morrison~\cite{KamLieMor23} study analogues of Conjecture~\ref{conj:Sidorenko} in the context of systems of linear equations; Lee~\cite{Lee21} gives lower bounds on $t(F,G)$ for a class of nonbipartite $F$ under the assumption that $G$ is `locally dense'; and the papers~\cite{BehMorNoe24,GrzLeeLidVol22,KraVolWei25} use similar entropy-based arguments in the closely-related context of commonality of graphs.
Finally, given a pair $F, F'$ of graphs, one can consider generalising the inequality appearing in Conjecture~\ref{conj:Sidorenko} to $t(F,G)^{1/e_F} \ge t(F',G)^{1/e_{F'}}$ and ask when it is valid for all $G$.
For example, Erdős and Simonovits conjectured that it holds when $F$ and $F'$ are both paths of odd lengths and $e_F \ge e_F'$, which was proved by Sa{\u g}lam~\cite{Sag18}; an entropy-based proof of this conjecture was given in~\cite{BleRay23}.
The case where both $F$ and $F'$ are trees is studied, using entropy techniques, in~\cite{BehCruNoeSim25}.

In order to add some substance to the above discussion, we use an adaptation of the method of Kopparty and Rossman~\cite{KopRos11} to show that \cref{conj:Sidorenko} is true for every connected bipartite graph $F$ that contains a `dominating' vertex, which was originally proved by Conlon, Fox, and Sudakov~\cite{ConFoxSud10} using the technique of dependent random choice (see the excellent survey of Fox and Sudakov~\cite{FoxSud11}).

\begin{theorem}
  \label{thm:Sidorenko-dominating-vertex}
  Suppose that $F$ is a connected bipartite graph with bipartition $V(F) = A \cup B$ such that some $\alpha \in A$ is adjacent to all of $B$.
  Then, for every nonempty graph $G$,
  \[
    t(F, G) \ge t(K_2, G)^{e_F}.
  \]
\end{theorem}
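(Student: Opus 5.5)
We will build a random homomorphism $\phi \colon V(F) \to V(G)$ with large entropy and use $\log|\Hom(F,G)| \ge H(\phi)$ from \cref{fact:entropy-uniform}. Write $n \coloneqq v_G$, $m \coloneqq e_G$ (counting ordered pairs, so $2m = \sum_u \deg u$), and $p \coloneqq t(K_2,G) = 2m/n^2$. The goal, after taking logarithms, is
\[
  H(\phi) \ge v_F \log n + e_F \log(2m/n^2).
\]

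The construction is as follows. Pick $\phi(\alpha)$ according to the stationary distribution $\pi(u) = \deg u/(2m)$. Conditionally on $\phi(\alpha)$, map every $b \in B$ independently to a uniformly random neighbour of $\phi(\alpha)$; each edge $\alpha b$ is then a uniform random oriented edge. Each $a \in A \setminus \{\alpha\}$ has its neighbourhood $N_a \subseteq B$ already embedded. Conditionally on $\phi_B$, we choose $\phi(a)$ independently for distinct $a$, from the distribution making $(\phi_{N_a},\phi(a))$ look like $|N_a|$ independent stationary edges emanating from a common vertex. Concretely, we set $\Pr(\phi(a)=w \mid \phi_{N_a}) \propto \deg(w)^{1-|N_a|}\prod_{b\in N_a}\mathbf 1[w\sim\phi(b)]$. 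This is well-defined because $\phi(\alpha)$ is a common neighbour. Every sample is a homomorphism.

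For the entropy computation, use the chain rule (\cref{fact:chain-rule}):
\[
  H(\phi) = H(\phi_\alpha,\phi_B) + \sum_{a\ne\alpha} H(\phi_a\mid\phi_B),
\]
where conditional independence gives the sum. The first term equals $H(\pi) + |B|\,\Ex[\log\deg\phi(\alpha)]$, and one checks that
\[
  H(\pi)+d\,\Ex_\pi[\log\deg] = \log(2m) + (d-1)\Ex_\pi[\log\deg].
\]
For the terms with $a\neq\alpha$, the cleanest route is to compare with the star. The conditional law of $\phi_{N_a\cup\{a\}}$ under our construction has the same joint law as that of $(\phi_{N_a},\phi_\alpha)$, namely that of a stationary star with $k=|N_a|$ leaves. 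Hence
\[
  H(\phi_a\mid\phi_B) = H(\phi_a\mid\phi_{N_a}) = H(\text{star}_k) - H(\text{leaves}_k).
\]
The key inequality is $H(\text{leaves}_k) \le H(\phi_{N_a})$ under the star law. Since the leaves each have law $\pi$, subadditivity (\cref{fact:entropy-subadditivity}) gives $H(\text{leaves}_k) \le k H(\pi)$. So each such term is at least
\[
  \log(2m) + (k-1)\Ex_\pi[\log\deg] + k\,\Ex_\pi[\log\deg] - kH(\pi) \;\ge\; \log(2m)-k\log n+(2k-1)\Ex_\pi[\log \deg]-\dots .
\]

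The main obstacle is making this bookkeeping close. We must show that the total is at least $v_F\log n + e_F\log(2m/n^2)$, which requires controlling $H(\pi)$ and $\Ex_\pi[\log\deg]$. The tool is Jensen together with nonnegativity of relative entropy (\cref{fact:relative-entropy-nonnegative}) in the form
\[
  \Ex_\pi[\log \deg] - H(\pi)\cdot 0 \ge \log(2m/n),
\]
that is, $\DKL{\pi}{U}\ge 0$, which reads $\log n - H(\pi) \ge 0$ and is equivalent to $\Ex_\pi\log\deg \ge \log(2m/n)$. If the naive subadditive bound loses too much, I would instead lower-bound $H(\phi_a\mid\phi_B)$ directly by the Kopparty--Rossman trick. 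That is, apply the data processing inequality (\cref{fact:data-processing}) to compare the pair (star, leaves) with the pair (uniform vertex plus independent uniform neighbours, projection). Doing so yields
\[
  H(\phi_a\mid\phi_{N_a}) \ge \log n + k\log(2m/n^2) - (\text{terms cancelling with } |B|\log\text{-degree sums}).
\]
Summing over all of $A$ and using $e_F=\sum_{a\in A}|N_a|$ with $|N_\alpha|=|B|$ should then give exactly $|A|\log n + |B|\log n + e_F\log p$.
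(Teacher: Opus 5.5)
Your construction is the same as the paper's. Drawing $\phi(\alpha)\sim\pi$ and the $\phi(b)$ as independent uniform neighbours amounts to taking $(\phi_\alpha,\phi_\beta)$ to be a uniform oriented edge and the other $\phi_b$ to be conditionally independent copies of $\phi_\beta$ given $\phi_\alpha$. Your kernel for $\phi(a)$ is exactly the conditional law of $\phi_\alpha$ given $\phi_{N_a}$. The chain-rule decomposition and the identity $H(\phi_a\mid\phi_{N_a})=H(\phi_\alpha,\phi_{N_a})-H(\phi_{N_a})$ are also what the paper uses.

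What your write-up lacks is the final computation, and the part you did write contains errors. In your bound for the terms $a\neq\alpha$, the summand $+k\,\Ex_\pi[\log\deg]$ should not be there. Also, $H(\text{leaves}_k)=H(\phi_{N_a})$ is an identity, not the key inequality; what you need is an upper bound on $H(\phi_{N_a})$. The bookkeeping you call the main obstacle closes immediately with the two bounds you already named, so the Kopparty--Rossman fallback is unnecessary.

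Write $E\coloneqq\Ex_\pi[\log\deg]$, so $H(\pi)=\log(2m)-E$. For $k=|N_a|$,
\[
  H(\phi_a\mid\phi_{N_a}) = \log(2m)+(k-1)E-H(\phi_{N_a}) \ge \log(2m)+(k-1)E-k\log n .
\]
Add $H(\phi_\alpha,\phi_B)=\log(2m)+(|B|-1)E$ and use $\sum_{a\neq\alpha}|N_a|=e_F-|B|$ to get
\[
  H(\phi)\ge |A|\log(2m)+(e_F-|A|)E-(e_F-|B|)\log n .
\]
Since $F$ is connected, every $a\in A$ has a neighbour, so $e_F\ge|A|$. You may therefore insert $E\ge\log(2m/n)$, which is equivalent to $H(\pi)\le\log n$. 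This gives
\[
  H(\phi)\ge e_F\log(2m)-(2e_F-v_F)\log n = v_F\log n+e_F\log(2m/n^2),
\]
which is exactly the target. This is the paper's computation: it writes $H(f)=e_F H(f_\alpha,f_\beta)-(e_F-|A|)H(f_\alpha)-\sum_{a\neq\alpha}H(f_{N_a})$ and then uses $H(f_\alpha)\le\log v_G$ and $H(f_{N_a})\le|N_a|\log v_G$.

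Your sharper bound $H(\phi_{N_a})\le kH(\pi)$ also works. It yields $H(\phi)\ge (v_F-e_F)\log(2m)+(2e_F-v_F)E$. The coefficient $2e_F-v_F$ is nonnegative because $e_F\ge|A|$ and $e_F\ge\deg\alpha=|B|$, so the same substitution finishes the proof.
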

\begin{proof}
  Let $\alpha$ be a vertex in $A$ with $N_\alpha = B$ and let $\beta$ be an arbitrary vertex in $B$.
  We construct a random homomorphism $f \in \Hom(F,G)$ as follows:
  \begin{enumerate}[label=(\roman*)]
  \item
    \label{item:Sidorenko-i}
    Let $(f_\alpha,f_\beta)$ be a uniformly random pair of adjacent vertices of $G$.
  \item
    \label{item:Sidorenko-ii}
    Let $(f_b)_{b \in B \setminus \{\beta\}}$ be $|B|-1$ conditionally independent copies of $f_\beta$ given $f_\alpha$.
  \item
    \label{item:Sidorenko-iii}
    Having defined $f_B$, generate the variables $(f_a)_{a \in A \setminus \{\alpha\}}$ independently, conditional on $f_B$, so that, for each $a$, the conditional distribution of $f_a$ given $f_B$ matches the conditional distribution of $f_\alpha$ given $f_{N_a}$.
  \end{enumerate}
  By definition, $f$ is a homomorphism from $F$ to $G$.
  Crucially, for every nonempty $C \subseteq B$,
  \begin{equation}
    \label{eq:Sidorenko-H-star}
    H(f_\alpha, f_C) = H(f_\alpha) + H(f_C \mid f_\alpha) = H(f_\alpha) + |C| \cdot H(f_\beta \mid f_\alpha)
    = |C| \cdot H(f_\alpha, f_\beta) - (|C|-1) \cdot H(f_\alpha),
  \end{equation}
  where the key (second) equality is a consequence of~\ref{item:Sidorenko-ii} and \cref{fact:conditional-entropy-subadditivity}.
  Further, by~\ref{item:Sidorenko-iii} and \cref{fact:monotonicity-of-conditional-entropy}, for every $a \in A \setminus \{\alpha\}$ and all sets $W$ satisfying $N_a \subseteq W \subseteq V(F) \setminus \{a\}$,
  \begin{equation}
    \label{eq:Sidorenko-H-reverse-star}
    \begin{split}
      H(f_a \mid f_W)
      & = H(f_a \mid f_B) = H(f_\alpha \mid f_{N_a}) = H(f_\alpha, f_{N_a}) - H(f_{N_a}) \\
      & = |N_a| \cdot H(f_\alpha, f_\beta) - (|N_a|-1) \cdot H(f_\alpha) - H(f_{N_a}),
    \end{split}
  \end{equation}
  where the last equality holds by~\eqref{eq:Sidorenko-H-star}.
  Let $\prec$ be an arbitrary ordering of $A \setminus \{\alpha\}$.
  By the chain rule,
  \[
    H(f) = H(f_\alpha, f_B) + \sum_{a \in A \setminus \{\alpha\}} H(f_a \mid f_B, f_\alpha, f_{\prec a}),
  \]
  where $f_{\prec a}$ stands for $(f_{a'})_{a' \in A \setminus \{\alpha\}, a' \prec a}$.
  Using~\eqref{eq:Sidorenko-H-star}, \eqref{eq:Sidorenko-H-reverse-star}, and the identity $e_F = \sum_{a \in A} |N_a|$, we may rewrite the above identity as
  \[
    H(f) = e_F \cdot H(f_\alpha, f_\beta) - (e_F - |A|) \cdot H(f_\alpha) - \sum_{a \in A \setminus \{\alpha\}} H(f_{N_a}).
  \]
  Finally, using the facts that $H(f_\alpha, f_\beta) = \log(2e_G)$, that $H(f_w) \le \log v_G$ and $H(f_{N_w}) \le |N_w| \cdot \log v_G$ for every $w \in V(F)$, which all follow from \cref{fact:entropy-uniform}, we conclude that
  \[
    H(f) \ge e_F \cdot \log(2e_G) - (e_F - |A|) \cdot \log v_G - (e_F - |N_\alpha|) \cdot \log v_G = e_F \cdot \log(2e_G) + (v_F-2e_F) \cdot \log v_G.
  \]
  Since $\log |\Hom(F,G)| \ge H(f)$, again by \cref{fact:entropy-uniform}, we conclude that
  \[
    \log t(F,G) = \log |\Hom(F,G)| - v_F \cdot \log v_G \ge e_F \cdot \bigl(\log(2e_G) - 2\log v_G\bigr) = e_F \cdot \log t(K_2, G).
  \]
\end{proof}

\section{Entropy and independence.}
\label{sec:entropy-independence}

Suppose that $X$ and $Y$ are two discrete random variables.
Recall \Cref{fact:entropy-subadditivity}, which states that $H(X, Y) \le H(X) + H(Y)$ and that equality holds if and only if $X$ and $Y$ are independent.
What can be said about the distribution of the vector $(X, Y)$ when we assume that $H(X,Y)$ is not much smaller than $H(X) + H(Y)$?
One answer to this question is provided by the following beautiful inequality due to Pinsker~\cite{Pin64}.
Recall that the \emph{total variation distance} between two distributions $P$ and $Q$ on the same finite set $\mathcal{X}$ is the quantity
\[
  \dTV(Q, P) \coloneqq \max\{Q(A) - P(A) : A \subseteq \mathcal{X}\}.
\]

\begin{proposition}[Pinsker's inequality]
  \label{prop:Pinsker}
  For every pair $X$, $Y$ of discrete random variables,
  \[
    \dTV\bigl((X,Y), X \otimes Y\bigr) \le \sqrt{\frac{H(X)+H(Y)-H(X,Y)}{2}}.
  \]
\end{proposition}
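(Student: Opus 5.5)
By \eqref{eq:subadditivity-via-DKL} with $n=2$, the quantity $H(X)+H(Y)-H(X,Y)$ equals $\DKL{(X,Y)}{X \otimes Y}$. Note that $\mathcal{L}(X,Y) \ll \mathcal{L}(X\otimes Y)$, since $\Pr(X=x)\Pr(Y=y)=0$ forces $\Pr(X=x,Y=y)=0$. So it suffices to prove the general form of Pinsker's inequality: for distributions $Q \ll P$ on a finite set $\mathcal{X}$,
\[
  \dTV(Q,P) \le \sqrt{\DKL{Q}{P}/2}.
\]

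The first step reduces this to two-point distributions using the data processing inequality (\cref{fact:data-processing}). Let $A \subseteq \mathcal{X}$ attain the maximum in the definition of $\dTV$, so that $\dTV(Q,P) = Q(A)-P(A)$. Let $T$ be the indicator function of $A$. Then $T$ pushes $Q$ and $P$ forward to $\Ber(q)$ and $\Ber(p)$, where $q = Q(A)$ and $p = P(A)$. \Cref{fact:data-processing} gives $\DKL{\Ber(q)}{\Ber(p)} \le \DKL{Q}{P}$. Absolute continuity is preserved by the pushforward, so the fact applies. It therefore suffices to prove the binary inequality
\[
  \dKL(q,p) \coloneqq q\log\frac{q}{p} + (1-q)\log\frac{1-q}{1-p} \ge 2(q-p)^2
\]
for all $q \in [0,1]$ and all $p$ with $\Ber(q) \ll \Ber(p)$.

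The main step is this binary inequality, which I would prove by one-variable calculus. Fix $q$ and set $g(p) \coloneqq \dKL(q,p) - 2(q-p)^2$ for $p \in (0,1)$. Clearly $g(q)=0$. Differentiating in $p$ gives
\[
  g'(p) = -\frac{q}{p} + \frac{1-q}{1-p} + 4(q-p) = (p-q)\left(\frac{1}{p(1-p)} - 4\right).
\]
Since $p(1-p) \le 1/4$, the bracket is nonnegative. Hence $g$ is nonincreasing on $(0,q]$ and nondecreasing on $[q,1)$, and so $g \ge g(q) = 0$.

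The remaining work is bookkeeping at the endpoints. If $p \in \{0,1\}$, absolute continuity forces $q = p$, and both sides of the binary inequality vanish. The endpoint cases $q \in \{0,1\}$ are covered by the same computation under the convention $0\log 0 = 0$. Combining the reduction with the binary bound gives $(q-p)^2 \le \DKL{Q}{P}/2$, and taking square roots yields the proposition.
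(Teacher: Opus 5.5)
Your proposal is correct and follows the paper's proof. Both rewrite $H(X)+H(Y)-H(X,Y)$ as $\DKL{(X,Y)}{X\otimes Y}$ via \eqref{eq:subadditivity-via-DKL}, push forward along the indicator of an event using the data processing inequality (\cref{fact:data-processing}), and finish with the binary bound $\dKL(q,p)\ge 2(p-q)^2$. The only difference is that the paper invokes \cref{lemma:dKL}, which it states without proof and in a sharper form used later, whereas you prove the weaker bound directly from $g'(p)=(p-q)\bigl(1/(p(1-p))-4\bigr)$, which is right; when $q\in\{0,1\}$ this works because $g$ extends continuously to $p=q$ with value $0$.
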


Roughly speaking, Pinsker's inequality says that if $H(X,Y)$ is `close' to $H(X) + H(Y)$, then $X$ and $Y$ are `almost independent'.
We remark that Pinsker first proved the above inequality with a greater constant; the version stated above was obtained independently by Csiszár~\cite{Csi66}, Kemperman~\cite{Kem69}, and Kullback~\cite{Kul67}.
The heart of the proof of \cref{prop:Pinsker} is the following folklore estimate.
Given $p \in (0,1)$ and $q \in [0,1]$, define
\[
  \dKL(q,p) \coloneqq \DKL{\Ber(q)}{\Ber(p)} = q \log \frac{q}{p} + (1-q) \log \frac{1-q}{1-p}.
\]

\begin{lemma}
  \label{lemma:dKL}
  For all $p \in (0,1)$ and $q \in [0,1]$,
  \[
    \dKL(q,p) \ge \frac{(p-q)^2}{2\max\{r(1-r) : \min\{p,q\} \le r \le \max\{p,q\}\}} \ge 2(p-q)^2.
  \]
\end{lemma}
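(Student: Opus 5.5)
Fix $p \in (0,1)$ and regard $g(q) \coloneqq \dKL(q,p)$ as a function of $q \in [0,1]$. The plan is a second-order Taylor expansion of $g$ around $q = p$ with integral (or Lagrange) remainder. The second inequality is immediate once the first is proved, because $r(1-r) \le 1/4$ for every $r \in [0,1]$, so the denominator is at most $1/2$.

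First I record the derivatives on $(0,1)$:
\[
  g(p) = 0, \qquad g'(q) = \log\frac{q}{p} - \log\frac{1-q}{1-p}, \qquad g'(p) = 0, \qquad g''(q) = \frac{1}{q} + \frac{1}{1-q} = \frac{1}{q(1-q)}.
\]
For $q \in (0,1)$, Taylor's theorem with integral remainder then gives
\[
  g(q) = \int_p^q (q-s)\, g''(s)\, ds = \int_p^q \frac{q-s}{s(1-s)}\, ds.
\]
This is valid whichever of $p, q$ is larger: when $q<p$ both factors $q-s$ and $ds$ change sign, so the integrand remains nonnegative in effect.

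Now let $M \coloneqq \max\{r(1-r) : \min\{p,q\} \le r \le \max\{p,q\}\}$. Every $s$ between $p$ and $q$ satisfies $s(1-s) \le M$, and hence $g''(s) \ge 1/M$. Consequently
\[
  g(q) \ge \frac{1}{M}\int_p^q (q-s)\,ds = \frac{(q-p)^2}{2M},
\]
which is the first inequality. Equivalently, the Lagrange remainder gives $g(q) = \tfrac12 g''(\xi)(q-p)^2$ for some $\xi$ between $p$ and $q$, with $g''(\xi) \ge 1/M$.

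The only delicate point is the endpoints $q \in \{0,1\}$, where $g''$ blows up and the Taylor expansion must be justified. I would handle this by continuity. With the convention $0\log 0 = 0$, the function $g$ is continuous on $[0,1]$. The right-hand side $(p-q)^2/(2M)$ is also continuous in $q$, since $M$ is a maximum of a continuous function over an interval whose endpoints vary continuously, and $M > 0$ because the interval contains $p \in (0,1)$. Letting $q \to 0$ or $q \to 1$ in the inequality already established for interior $q$ therefore yields it at the endpoints. Alternatively, the integral formula stays valid there as an improper integral, because $(q-s)/(s(1-s))$ remains integrable near $s = q \in \{0,1\}$ (the numerator vanishes to first order).
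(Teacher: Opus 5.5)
Your proof is correct and complete. The key steps all hold: $g(p)=g'(p)=0$, and $g''(s)=1/(s(1-s))\ge 1/M$ on the segment between $p$ and $q$, so the integral (or Lagrange) remainder gives at least $(p-q)^2/(2M)$. Your orientation remark covers the case $q<p$. The extension to $q\in\{0,1\}$ by continuity of both sides is legitimate, since $M\ge p(1-p)>0$. The second inequality is just $M\le 1/4$.

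The paper itself gives no proof of this lemma. It is quoted as a folklore estimate and used only in the proof of Pinsker's inequality and the Kruskal--Katona stability theorem, so there is no argument in the paper to compare against. Your second-order Taylor expansion is the standard proof and supplies what the paper omits.
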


\begin{proof}[Proof of \Cref{prop:Pinsker}]
  Consider an arbitrary event $A$ and let $T$ be its indicator, so that both $T(X,Y)$ and $T(X \otimes Y)$ are Bernoulli random variables with success probabilities $q \coloneqq \Pr((X,Y) \in A)$ and $p \coloneqq \Pr(X \otimes Y \in A)$, respectively.
  By \eqref{eq:subadditivity-via-DKL} and the data processing inequality (\cref{fact:data-processing}),
  \[
    H(X) + H(Y) - H(X,Y) = \DKL{(X,Y)}{X \otimes Y} \ge \DKL{T(X,Y)}{T(X \otimes Y)} = \dKL(q,p).
  \]
  The claimed inequality follows from \cref{lemma:dKL}.
\end{proof}

Pinsker's inequality, and its analogues, has recently proved to be a powerful tool in extremal and probabilistic combinatorics.
Ellis, Friedgut, Kindler, and Yehudayoff~\cite{EllFriKinYeh16} used a version of \cref{prop:Pinsker} to obtain a structural characterisation of sets for which the Loomis--Whitney inequality~\cite{LooWhi49} is nearly tight.
Kozma, Meyerovitch, Peled, and the author~\cite{KozMeyPelSam24} used entropy methods to obtain a structural characterisation of random finite metric spaces; a version of \cref{prop:Pinsker} played a central role in the argument.
Finally, a strengthening of \cref{prop:Pinsker} lies at the heart of the solution to the lower-tail problem for subgraph counts in the binomial random graph found by Kozma and the author~\cite{KozSam23}.
Several other authors have independently explored the interplay between entropy and independence~\cite{CojKrzPerZde18,CojHah21,JaiRisKoe19,ManRag17,Mon08,RagTan12}.

As an illustration of the power of \cref{prop:Pinsker}, we present an entropy-based proof of the following stability version of the Kruskal--Katona theorem (\cref{prop:Kruskal-Katona-weak}), originally proved by Keevash~\cite{Kee08}.
We refer the interested reader to~\cite[Section~3]{KozSam23} for another simple application of \cref{prop:Pinsker} in the context of counting triangle-free graphs.

\begin{theorem}
  Suppose that $\mathcal{H}$ is a family of $k$-element sets with cardinality $x^k/k!$, for some real $x > 0$.
  If $|\partial \mathcal{H}| \le (1+\varepsilon) x^{k-1} / (k-1)!$, then there is an $\lceil x \rceil$-element subset of $V \coloneqq \bigcup \mathcal{H}$ that contains all but at most $C_k\varepsilon^{1/2} x^{k-1} / (k-1)!$ members of $\partial \mathcal{H}$, where $C_k$ is a constant that depends only on $k$.
\end{theorem}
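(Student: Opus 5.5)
We revisit the entropy proof of \cref{prop:Kruskal-Katona-weak}, now keeping track of every inequality that was thrown away, and then use Pinsker's inequality (\cref{prop:Pinsker}) to turn near-equality into near-independence. Let $X = (X_1, \dotsc, X_k)$ be a uniformly random ordering of a uniformly random member of $\mathcal{H}$. Then $H(X) = \log(k!|\mathcal{H}|) = k\log x$. Each $X_{\br{k}\setminus\{i\}}$ takes values among at most $(k-1)!\,|\partial\mathcal{H}| \le (1+\varepsilon)x^{k-1}$ ordered shadow sets, so $H(X_{\br{k}\setminus\{i\}}) \le (k-1)\log x + \varepsilon$.

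\emph{Step 1 (slack).} The proof of Shearer's inequality (\cref{lemma:Shearer}) with $\mathcal{F} = \{\br{k}\setminus\{i\}\}$ and $t = k-1$ writes
\[
  \textstyle \sum_i H(X_{\br{k}\setminus\{i\}}) - (k-1)H(X)
\]
as a sum of nonnegative terms: the differences between conditioning on the full past and on the past within $W$, plus the degree-excess terms. By Step~0 this total is at most $k\varepsilon$. In particular every individual slack term is $O(\varepsilon)$.

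\emph{Step 2 (identify the useful slack).} Take the ordering $1 \prec \dots \prec k$ and $W = \br{k}\setminus\{1\}$. The terms give
\[
  H(X_j \mid X_2, \dotsc, X_{j-1}) - H(X_j \mid X_1, \dotsc, X_{j-1}) = O(\varepsilon).
\]
Summing, $I(X_1 ; X_2, \dotsc, X_k) = O(\varepsilon)$. By symmetry of $X$ under permuting coordinates, the same holds for each coordinate versus the rest. Pinsker then gives $\dTV\bigl((X_1, X_{\br{k}\setminus\{1\}}), X_1 \otimes X_{\br{k}\setminus\{1\}}\bigr) = O(\sqrt\varepsilon)$.

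We also need that the marginal of $X_1$ is close to uniform on a set of size about $x$. For this, compare entropies: $H(X_1) \ge H(X) - H(X_{\br{k}\setminus\{1\}}) \ge \log x - \varepsilon$. Moreover, near-independence forces the support structure: $X$ is essentially a product of $k$ nearly independent copies of the marginal $\mu$ of $X_1$, restricted to distinct coordinates. Iterating the near-independence over all coordinates gives $\dTV(X, \mu^{\otimes k}) = O_k(\sqrt{\varepsilon})$.

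\emph{Step 3 (extract the set).} Since $X$ is uniform on a set of size $x^k$, the closeness to $\mu^{\otimes k}$ makes $\mu$ nearly uniform. Concretely, $\mu^{\otimes k}$ puts mass $1-O(\sqrt\varepsilon)$ on points of probability at most about $x^{-k}$, so $\sum_v \mu(v)^k$ is essentially bounded by $x^{-k}$ up to errors. Together with $H(\mu)\ge \log x - \varepsilon$, this should show that the set $S$ of the $\lceil x\rceil$ heaviest vertices carries $\mu$-mass $1 - O(\sqrt\varepsilon)$. Then $\Pr(X_{\br{k}\setminus\{1\}} \not\subseteq S) \le (k-1)\Pr(X_1 \notin S) + O(\sqrt\varepsilon) = O_k(\sqrt\varepsilon)$.

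\emph{Step 4 (pass to the shadow).} The variable $X_{\br{k}\setminus\{1\}}$ is close in entropy to uniform on the ordered shadow: its entropy is at least $(k-1)\log x - O(\varepsilon)$ while its support has size at most $(1+\varepsilon)x^{k-1}$. Hence it is $O(\sqrt\varepsilon)$-close in total variation to uniform, by \eqref{eq:entropy-minus-relative-entropy} and \cref{lemma:dKL}. Therefore the fraction of shadow sets not contained in $S$ is $O_k(\sqrt\varepsilon)$, which gives $C_k\varepsilon^{1/2}x^{k-1}/(k-1)!$ exceptional members.

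The main obstacle is Step~3: converting "nearly a product measure, and uniform on a set of size $x^k$" into "the marginal is concentrated on exactly $\lceil x\rceil$ points." I would first try the following argument. Near-independence implies that, for typical $(v_1, \dotsc, v_k)$ with distinct entries, the product $\prod \mu(v_i)$ is comparable to $x^{-k}$. Combined with $\sum\mu = 1$, this pins typical $\mu(v)$ near $1/x$, and hence the mass outside the top $\lceil x\rceil$ vertices is small. Making this rigorous with only $O(\sqrt\varepsilon)$ loss may instead require a pure counting step: among the vertices of large $\mu$-mass there can be at most about $x$, since otherwise $|\mathcal{H}|$ or $|\partial\mathcal{H}|$ would exceed its budget via \cref{prop:Kruskal-Katona-weak}.
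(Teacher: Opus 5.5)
Your Steps~0--2 and~4 are sound and parallel the paper. The Shearer slack gives $I(X_1;X_{(1)})=H(X_1)+H(X_{(1)})-H(X)\le k\log(1+\varepsilon)$, where $X_{(1)}\coloneqq X_{\br{k}\setminus\{1\}}$; this is exactly the paper's bound on $\DKL{X}{X_1\otimes X_{(1)}}$. Likewise $\DKL{X_{(1)}}{U}\le\varepsilon$ transfers statements about $X_{(1)}$ to a uniform ordered shadow set $U$.

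The gap is Step~3. It carries essentially all the content, and you have not proved it. The two facts you propose to combine there cannot do the job. ``Most of the mass of $\mu^{\otimes k}$ lies on atoms of probability at most about $x^{-k}$'' and ``$H(\mu)\ge\log x-\varepsilon$'' both say that $\mu$ is spread out. Both hold for $\mu$ uniform on $2x$ vertices, whose $\lceil x\rceil$ heaviest vertices carry only half the mass. (The bound on $\sum_v\mu(v)^k$ is also false: for $\mu$ uniform on $x$ vertices it equals $x^{1-k}$.) What is needed is an anti-spreading statement: vertices with $x\mu(v)$ noticeably below $1$ carry little mass. Your fallback counting step only handles the easy half, namely that there are at most about $x$ heavy vertices. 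If you try to extract anti-spreading from $\dTV(X,\mu^{\otimes k})=O(\sqrt{\varepsilon})$ as in your last paragraph, Pinsker's square root has already been spent. The natural threshold argument (products within a factor $1\pm\gamma$ of $x^{-k}$ except on mass $O(\sqrt{\varepsilon}/\gamma)$, then optimise $\gamma$) then yields only $O(\varepsilon^{1/4})$.

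The paper avoids this double loss by staying with relative entropy until a single Markov step. It applies data processing with the indicator $T$ of ordered members of $\mathcal{H}$ to $X_1\otimes X_{(1)}$ versus $\pi(U)\otimes U$. This shows that an independent pair $(\pi(U),U)$ fails to form an ordered edge with probability $q_U\le 3k\varepsilon$, linear in $\varepsilon$. Kruskal--Katona applied to links shows that each $v\in V_\delta$ fails this test with probability at least $\delta$, so $\Pr(\pi(U)\in V_\delta)\le 3k\varepsilon/\delta$. Double counting $\sum_v|\partial_v\partial\mathcal{H}|=(k-1)|\partial\mathcal{H}|$ gives $|V\setminus V_\delta|\le(1+\varepsilon)x/(1-\delta)$, and one takes $\delta=\sqrt{3k\varepsilon}$.

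Your framework also admits a short repair with two ingredients.
\begin{itemize}
\item Since $\partial_v\mathcal{H}\subseteq\partial\mathcal{H}$, there is the pointwise bound $\mu(v)=(k-1)!\,|\partial_v\mathcal{H}|/x^k\le(1+\varepsilon)/x$.
\item You should use the \emph{upper} bound $H(\mu)\le\log x+k\log(1+\varepsilon)$, which follows from $I(X_1;X_{(1)})\le k\log(1+\varepsilon)$ and Han's inequality $H(X_{(1)})\ge(k-1)\log x$. Indeed $\DKL{X}{\mu^{\otimes k}}=k(H(\mu)-\log x)$, so the relative-entropy form of your Step~2 already contains such a bound; the damage came from passing to total variation.
\end{itemize}
Then $\log\bigl((1+\varepsilon)/(x\mu(X_1))\bigr)$ is nonnegative with mean at most $(k+1)\varepsilon$. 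Markov's inequality gives $\mu(\{v : x\mu(v)<1-\sqrt{\varepsilon}\})\le(k+1)\sqrt{\varepsilon}$. At most $x/(1-\sqrt{\varepsilon})$ vertices satisfy $x\mu(v)\ge 1-\sqrt{\varepsilon}$, so the $\lceil x\rceil$ heaviest vertices carry $\mu$-mass at least $1-(k+2)\sqrt{\varepsilon}$. Your Step~4 then finishes the proof.
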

\begin{proof}
  Let $U$ be the uniformly chosen random ordered set in $\partial \mathcal{H}$ and denote by $\pi \colon V^{k-1} \to V$ the projection on the last coordinate.
  Further, for a family $\mathcal{G}$ of subsets of $V$ and a $v \in V$, let $\partial_v \mathcal{G} \coloneqq \{E \setminus \{v\} : v \in E \in \mathcal{G}\}$ be the link of $v$ in $\mathcal{G}$, cf.\ the definition of the shadow of $\mathcal{G}$.
  Given a $\delta \in [0,1]$, define
  \[
    V_\delta \coloneqq \{ v \in V : |\partial_v\partial \mathcal{H}| \le (1-\delta)x^{k-2}/(k-2)!\}
  \]
  and observe that
  \[
    \frac{(1-\delta) x^{k-2}}{(k-2)!} \cdot |V \setminus V_\delta| \le \sum_{v \in V} |\partial_v \partial \mathcal{H}| = (k-1) \cdot |\partial \mathcal{H}| \le \frac{(1+\varepsilon)x^{k-1}}{(k-2)!},
  \]
  which means that $|V \setminus V_\delta| \le (1+\varepsilon)/(1-\delta) \cdot x$.
  In particular, if we let $R$ be a randomly chosen set of $\min\{ \lceil x \rceil, |V \setminus V_\delta|\}$ vertices of $V \setminus V_\delta$, every $v \in V \setminus V_\delta$ satisfies
  \[
    \Pr(v \in R) \ge \min\left\{1, \frac{x}{|V \setminus V_\delta|}\right\} \ge \frac{1-\delta}{1+\varepsilon} \ge 1 - \delta - \varepsilon.
  \]
  Consequently,
  \[
    \Pr(U \nsubseteq  R)
    \le \Pr(U \cap V_\delta \neq \emptyset) + \Pr(U \nsubseteq R \mid U \subseteq V \setminus V_\delta)
    \le (k-1) \cdot \Pr(\pi(U) \in V_\delta) + (k-1) \cdot (\delta + \varepsilon).
  \]
  It thus suffices to show that $\Pr(\pi(U) \in V_\delta) = O(\varepsilon^{1/2})$ for some $\delta = O(\varepsilon^{1/2})$.

  Let $X = (X_1, \dotsc, X_k)$ be a uniformly chosen random ordered set of $\mathcal{H}$, so that $H(X) = \log(k! \cdot |\mathcal{H}|) = k\log x$.
  Writing $X_{(i)}$ for the projection of $X$ along the $i$th coordinate, Shearer's inequality (\cref{lemma:Shearer}) implies that
  \[
    k(k-1) \cdot \log x = (k-1) \cdot H(X) \le \sum_{i=1}^k H(X_{(i)}) = k \cdot H(X_{(1)}),
  \]
  which, by \eqref{eq:entropy-minus-relative-entropy}, means that
  \[
    \DKL{X_{(1)}}{U} = \log\bigl((k-1)! \cdot |\partial \mathcal{H}|\bigr) - H(X_{(1)}) \le \log(1+\varepsilon) \le \varepsilon.
  \]
  Further, by the data processing inequality (\cref{fact:data-processing}), we have
  \[
    \DKL{X_1}{\pi(U)} = \DKL{X_k}{\pi(U)} = \DKL{\pi(X_{(1)})}{\pi(U)} \le \DKL{X_{(1)}}{U} \le \varepsilon.
  \]
  We conclude that
  \begin{equation}
    \label{eq:KK-stability-DKL-X-from-U}
    \DKL{X_1 \otimes X_{(1)}}{\pi(U) \otimes U} = \DKL{X_1}{\pi(U)} + \DKL{X_{(1)}}{U} \le 2\varepsilon.
  \end{equation}
  We will now bound the left-hand side of~\eqref{eq:KK-stability-DKL-X-from-U} from below.

  To this end, note first that
  \[
    \sum_{i=1}^k H(X_{(i)}) - (k-1) \cdot H(X) \le k \cdot \log\bigl((k-1)! \cdot |\partial \mathcal{H}|\bigr) - k(k-1)\log x \le k\log(1+\varepsilon),
  \]
  where the first inequality follows from \cref{fact:entropy-uniform}.
  On the other hand, by the chain rule (\cref{fact:chain-rule}),
  \begin{equation}
    \label{eq:KK-stability-HX-chain-rule}
    (k-1) \cdot H(X) = \sum_{i=1}^k \sum_{j \neq i} H(X_j \mid X_{< j}),
  \end{equation}
  while the chain rule plus monotonicity of conditional entropy (\cref{fact:monotonicity-of-conditional-entropy}) yield
  \begin{equation}
    \label{eq:KK-stability-sum-HX-chain-rule}    
    \sum_{i=1}^k H(X_{(i)}) \ge H(X_{(1)}) + \sum_{i=2}^k\sum_{j\neq i} H(X_j \mid X_{< j}).
  \end{equation}
  Combining \eqref{eq:KK-stability-HX-chain-rule} and \eqref{eq:KK-stability-sum-HX-chain-rule} and using the chain rule once more, we obtain
  \[
    \sum_{i=1}^k H(X_{(i)}) - (k-1) \cdot H(X) \ge H(X_{(1)}) - \sum_{j=2}^k H(X_j \mid X_{< j}) = H(X_{(1)}) + H(X_1) - H(X).
  \]
  Recalling identity~\cref{eq:subadditivity-via-DKL}, we conclude that
  \begin{equation}
    \label{eq:KK-stability-DKL-X-from-X}
    \DKL{X}{X_1 \otimes X_{(1)}} = H(X_{(1)}) + H(X_1) - H(X) \le k \log(1+\varepsilon).
  \end{equation}
  
  Finally, let $T \colon V^k \to \{0,1\}$ be the indicator of (ordered sets of) $\mathcal{H}$ and consider the two probabilities
  \[
    q_X \coloneqq \Pr\bigl(T(X_1 \otimes X_{(1)}) = 0\bigr)
    \qquad
    \text{and}
    \qquad
    q_U \coloneqq \Pr\bigl(T(\pi(U) \otimes U) = 0\bigr).
  \]
  By the data processing inequality (\cref{fact:data-processing}), by~\eqref{eq:KK-stability-DKL-X-from-X}, and since $T(X) = 1$ with probability one,
  \[
    - \log (1-q_X) = \dKL(1, 1-q_X) = \DKL{T(X)}{T(X_1 \otimes X_{(1)})} \le \DKL{X}{X_1 \otimes X_{(1)}} \le k \log(1+\varepsilon),
  \]
  which yields $q_X \le 1 - (1+\varepsilon)^{-k} \le k\varepsilon$.
  We claim that $q_U \le 3k\varepsilon$.
  Indeed, either $q_U \le q_X \le k \varepsilon$ or $q_U > q_X$.
  In the latter case, by the data processing inequality (\cref{fact:data-processing}),
  \[
    \DKL{X_1 \otimes X_{(1)}}{\pi(U) \otimes U} \ge \DKL{T(X_1 \otimes X_{(1)})}{T(\pi(U) \otimes U)} = \dKL(q_X, q_U),
  \]
  and thus, by \cref{lemma:dKL} and~\eqref{eq:KK-stability-DKL-X-from-U},
  \[
    \frac{(q_U - k \varepsilon)^2}{2q_U} \le \frac{(q_U - q_X)^2}{2q_U} \le \dKL(q_X, q_U) \le 2 \varepsilon,
  \]
  which implies that $q_U \le 3k\varepsilon$.

  Finally, for every $v \in V$,
  \[
    \Pr\bigl(T(v \otimes U) = 0\bigr) = \frac{|\partial \mathcal{H}| - |\partial_v \mathcal{H}|}{|\partial \mathcal{H}|} \ge 1 - \frac{(k-1)!}{x^{k-1}} \cdot |\partial_v \mathcal{H}|,
  \]
  where the last inequality holds as $|\partial \mathcal{H}| \ge x^{k-1}/(k-1)!$, by the Kruskal--Katona Theorem (\cref{prop:Kruskal-Katona-weak}).
  Since $|\partial_v \mathcal{H}| \le (1-\delta) x^{k-1}/(k-1)!$ for each $v \in V_\delta$, again by \cref{prop:Kruskal-Katona-weak} and the fact that $\partial \partial_v\mathcal{H} = \partial_v \partial \mathcal{H}$ for every $v \in V$, we have
  \[
    3k \varepsilon \ge q_U \ge \delta \cdot \Pr(\pi(U) \in V_\delta).
  \]
  We may now complete the proof by taking $\delta \coloneqq \sqrt{3k\varepsilon}$.
\end{proof}

\section{The union-closed sets conjecture.}
\label{sec:union-closed}

The so-called union-closed sets conjecture, posed by Peter Frankl in 1979, is one of the most notorious conjectures in extremal set theory.
It states that, for every nonempty finite family $\mathcal{F}$ of sets that is closed under union (that is, $A \cup B \in \mathcal{F}$ for every pair $A, B \in \mathcal{F}$), there is an element $x \in \bigcup \mathcal{F}$ that belongs to at least half of all the sets in $\mathcal{F}$.
Despite its innocuous formulation, the conjecture remains open to this day.
Relatively little progress towards its solution had been made until Justin Gilmer~\cite{Gil} found a beautiful and surprising entropy-based argument showing that every union-closed family $\mathcal{F}$ admits an element $x \in \bigcup \mathcal{F}$ that belongs to at least one percent of all sets in $\mathcal{F}$.
Gilmer predicted that the key technical lemma in his argument could be improved to yield a stronger version of his result with one percent replaced by $(3-\sqrt{5})/2 \approx 38 \%$.
Soon afterwards, this feat was independently achieved by four groups of authors~\cite{AlwHuaSel24,ChaLov,Peb,Saw}.
Finally, we mention that two subsequent works~\cite{Cam,Yu23} used a modification of Gilmer's approach (already suggested by Sawin~\cite{Saw}) to obtain a slightly improved bound on the maximum frequency of an element in a union-closed family.

In the remainder of this section, we present a sketch of Gilmer's lovely argument, leaving out a key technical inequality whose proof requires some nontrivial calculus.
The heart of the matter is the following beautiful theorem.

\begin{theorem}
  \label{thm:Gilmer}
  Let $A$ and $B$ be two independent samples from a distribution over subsets of~$\br{n}$.
  If $H(A) > 0$ and $\max_i \Pr(i \in A) < (3-\sqrt{5})/2$, then $H(A \cup B) > H(A)$.
\end{theorem}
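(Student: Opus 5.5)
We identify each set with its characteristic vector $A = (A_1, \dotsc, A_n) \in \{0,1\}^n$ and write $A \cup B = (A_1 \vee B_1, \dotsc, A_n \vee B_n)$. Following Gilmer, we expand both entropies by the chain rule (\cref{fact:chain-rule}) in the natural coordinate order:
\[
  H(A \cup B) = \sum_{i=1}^n H\bigl((A\cup B)_i \mid (A \cup B)_{<i}\bigr), \qquad H(A) = \sum_{i=1}^n H(A_i \mid A_{<i}).
\]
The plan is to show the inequality term by term, with strict inequality for at least one $i$.

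\textbf{Step 1 (reduce the conditioning).} The pair $(A_{<i}, B_{<i})$ determines $(A \cup B)_{<i}$, so \cref{fact:monotonicity-of-conditional-entropy} gives
\[
  H\bigl((A\cup B)_i \mid (A \cup B)_{<i}\bigr) \ge H\bigl(A_i \vee B_i \mid A_{<i}, B_{<i}\bigr).
\]
For fixed prefixes $a = A_{<i}$ and $b = B_{<i}$, independence of $A$ and $B$ means that, conditionally, $A_i$ and $B_i$ are independent Bernoulli variables. Their success probabilities are $p_a \coloneqq \Pr(A_i = 1 \mid A_{<i} = a)$ and $p_b$, respectively. Hence $A_i \vee B_i \sim \Ber(1-(1-p_a)(1-p_b))$, and the right-hand side becomes
\[
  \Ex\bigl[h(1-(1-p_A)(1-p_B))\bigr] = \Ex\bigl[h(p_A + p_B - p_Ap_B)\bigr],
\]
where $p_A, p_B$ are two i.i.d.\ copies of the random variable $p_{A_{<i}}$. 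Meanwhile $H(A_i \mid A_{<i}) = \Ex[h(p_A)]$, and $\Ex[p_A] = \Pr(i \in A) < (3-\sqrt5)/2$.

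\textbf{Step 2 (the key analytic inequality).} It remains to prove that, for i.i.d.\ $[0,1]$-valued $p, q$ with $\Ex p < \psi \coloneqq (3-\sqrt5)/2$,
\[
  \Ex\bigl[h(p+q-pq)\bigr] \ge \Ex\bigl[h(p)\bigr],
\]
with strict inequality whenever $\Ex[h(p)] > 0$. I expect this step to be the main obstacle; it is the ``nontrivial calculus''. The cleanest route is a pointwise two-variable bound
\[
  h(p+q-pq) \ge \tfrac12\bigl(h(p)\,\phi(q) + h(q)\,\phi(p)\bigr),
\]
with $\phi$ linear in its argument, e.g.\ $\phi(x) = (1-x)/(1-\psi)$. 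Taking expectations and using independence then gives
\[
  \Ex\bigl[h(p+q-pq)\bigr] \ge \Ex[h(p)] \cdot \frac{1-\Ex p}{1-\psi} > \Ex[h(p)]
\]
whenever $\Ex[h(p)] > 0$.

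The threshold $\psi$ is where $1-\psi = \psi^{-1}(\ldots)$ balances. It solves $\psi^2 - 3\psi + 1 = 0$, i.e.\ $(1-\psi)^2 = \psi$, which is exactly the tight case $p = q$ near $0$. There $h(2p-p^2) \approx h(p)$ requires comparing $h(x^2)$ with $h(x)$ at $x = 1-p$. So I would first verify the symmetric case $h(1-x^2) \ge$ (appropriate multiple of) $h(1-x)$ by one-variable calculus. I would then reduce the general two-variable inequality to it, e.g.\ via the substitution $x = 1-p$, $y = 1-q$, which turns the claim into an inequality for $h(1-xy)$ that can be handled by a concavity/AM--GM-type argument in $\log x, \log y$.

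\textbf{Step 3 (assemble).} Summing Step 2 over $i$ yields $H(A \cup B) \ge H(A)$. Since $H(A) > 0$, some $i$ has $H(A_i \mid A_{<i}) > 0$, and for that $i$ Step 2 is strict, giving $H(A \cup B) > H(A)$.
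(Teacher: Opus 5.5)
You follow the paper's sketch exactly: chain rule, conditioning on both prefixes, and reduction to $\Ex[h(p+q-pq)] \ge \Ex[h(p)]$ for i.i.d.\ $p,q$ with $\Ex p < (3-\sqrt5)/2$, leaving the analytic inequality unproved just as the paper does. Your pointwise bound reads $h(xy) \ge \frac{1+\sqrt5}{4}\bigl(x h(y) + y h(x)\bigr)$ in the variables $x = 1-p$, $y = 1-q$, and it is true; your concavity-in-$\log x$ idea reduces it to Boppana's inequality, since $s \mapsto e^{s}h(e^{-s})$ is concave, so $h(x)/x + h(y)/y \le 2h(\sqrt{xy})/\sqrt{xy}$, and $h(z^2) \ge \frac{1+\sqrt5}{2}\, z\, h(z)$ at $z = \sqrt{xy}$ then finishes (one small correction to your heuristic: the tight case is the constant $p = q = (3-\sqrt5)/2$, not $p$ near $0$).
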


Before we sketch the proof of~\cref{thm:Gilmer}, we explain how it implies the statement about union-closed families.
Let $\mathcal{F}$ be a finite collection of sets that is closed under union and let $A$ and $B$ be two independent uniformly random members of $\mathcal{F}$.
If no element $x \in \bigcup \mathcal{F}$ belonged to at least $(3-\sqrt{5})/2$-proportion of all sets in $\mathcal{F}$, then \cref{thm:Gilmer} would imply that $H(A \cup B) > H(A) = \log |\mathcal{F}|$.
However, since $\mathcal{F}$ is union-closed, $A \cup B$ is an element of $\mathcal{F}$ and thus $H(A \cup B) \le \log |\mathcal{F}|$ by \cref{fact:entropy-uniform}, a contradiction.

\begin{proof}[Proof of \cref{thm:Gilmer} (sketch)]
  Let $X, Y \in \{0,1\}^n$ be the characteristic vectors of $A$ and $B$, respectively.
  Write $\vee$ for the coordinate-wise maximum operator.
  By the chain rule (\cref{fact:chain-rule}),
  \[
    H(A \cup B) - H(A) = H(X \vee Y) - H(X) = \sum_{i=1}^n \bigl(H(X_i \vee Y_i \mid X_{< i} \vee Y_{< i}) - H(X_i \mid X_{< i})\bigr).
  \]
  Fix some $i \in \br{n}$.
  Since conditioning on more information only reduces entropy (\cref{fact:monotonicity-of-conditional-entropy}),
  \[
    H(X_i \vee Y_i \mid X_{< i} \vee Y_{< i}) - H(X_i \mid X_{< i}) \ge H(X_i \vee Y_i \mid X_{< i}, Y_{< i}) - H(X_i \mid X_{< i}) \eqqcolon \Delta_i.
  \]
  Writing $p_i \coloneqq \Ex[X_i \mid X_{<i}]$ and $q_i \coloneqq \Ex[Y_i \mid Y_{< i}]$ and recalling the definition of conditional entropy, one quickly realises that the assumption that $X$ and $Y$ are independent means that
  \begin{equation}
    \label{eq:union-closed-Delta-i}    
    \Delta_i = \Ex[h(p_i + q_i - p_iq_i) - h(p_i)],
  \end{equation}
  where $h$ is the binary entropy function defined in \Cref{sec:binary-entropy}.
  It clearly suffices to argue that $\Delta_i \ge 0$ for every $i$ and that $\Delta_i > 0$ unless $p_i = q_i = 0$ with probability one.
  It turns out that this follows from our main assumption that $\Ex[p_i] = \Ex[q_i] = \Pr(i \in A) < (3-\sqrt{5})/2$.
  If both $p_i$ and $q_i$ were constant (which is always true when $i=1$), verifying that $\Delta_i \ge 0$ (and that $\Delta_i > 0$ unless $p_i = q_i = 0$) would be a fairly routine calculus exercise.
  In the case where $p_i$ and $q_i$ are random, this requires substantial technical work, see~\cite{AlwHuaSel24,ChaLov,Peb,Saw}.
  We just remark that the significance of $(3-\sqrt{5})/2$ is that it is the only nontrivial solution to the equation $h(2p-p^2) = h(p)$ and that the heart of the matter seems to lie in proving that $h(p^2) \ge (\sqrt{5}+1)/2 \cdot ph(p)$ for all $p \in [0,1]$, which had been proved by Boppana in the 1980s, see~\cite{Bop}.
\end{proof}

\section{Entropy-based approach to the Turán problem.}
\label{sec:Turan-via-entropy}

The recent work of Chao and Yu~\cite{ChaYu-Turan} makes an interesting connection between entropy and the classical Turán problem for hypergraphs that has already been used to prove new bounds on Turán densities of several families of hypergraphs~\cite{IlkYan,Liu-Mantel,Liu-spectral,MaZhu}.
Given a family $\mathcal{F}$ of $k$-uniform hypergraphs, we say that a hypergraph $G$ is \emph{$\mathcal{F}$-free} if $G$ does not contain any member of $\mathcal{F}$ as a subgraph.
Define the \emph{Turán number} $\ex(n, \mathcal{F})$ as the largest number of edges in an $\mathcal{F}$-free hypergraph with $n$ vertices and let
\[
  \pi(\mathcal{F}) \coloneqq \lim_{n \to \infty} \ex(n, \mathcal{F}) \cdot \binom{n}{k}^{-1}.
\]
The \emph{blowup density} of a $k$-uniform hypergraph $G$ is the quantity
\[
  b(G) \coloneqq k! \cdot \sup\left\{\sum_{A \in G} \prod_{v \in A} x_v : x \in [0,1]^{V(G)}, \, \sum_{v \in V(G)} x_v = 1\right\},
\]
which is naturally interpreted as the largest edge density of a blowup of $G$.
A hypergraph $G$ is called \emph{$\mathcal{F}$-hom-free} if $G$ does not contain a homomorphic copy of any member of $\mathcal{F}$, that is, if $\Hom(F,G) = \emptyset$ for every $F \in \mathcal{F}$.
It is well-known that, for every family $\mathcal{F}$,
\begin{equation}
  \label{eq:pi-via-b}
  \pi(\mathcal{F}) = \sup \{ b(G) : \text{$G$ is $\mathcal{F}$-hom-free}\}  = \sup\left\{k! \cdot e_G \cdot v_G^{-k} : \text{$G$ is $\mathcal{F}$-hom-free} \right\}.
\end{equation}
Chao and Yu~\cite{ChaYu-Turan} provide the following alternative description of $\pi(\mathcal{F})$.

\begin{proposition}[\cite{ChaYu-Turan}]
  \label{prop:Turan-density-entropy}
  For any family $\mathcal{F}$ of $k$-uniform hypergraphs, $\log \pi(\mathcal{F})$ is the supremum of $H(X_1, \dotsc, X_k) - kH(X_1)$ over uniform orderings $(X_1, \dotsc, X_k)$ of a random edge of some nonempty $\mathcal{F}$-hom-free $k$-uniform hypergraph $G$.
\end{proposition}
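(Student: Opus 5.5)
We prove two inequalities, using the characterisation \eqref{eq:pi-via-b} of $\pi(\mathcal{F})$ as the supremum of $k!\, e_G v_G^{-k}$ (equivalently of $b(G)$) over $\mathcal{F}$-hom-free $G$. Here a uniform ordering of a random edge means the following: $(X_1,\dotsc,X_k)$ is a random edge of $G$ (with some distribution) listed in a uniformly random order, independently of the edge. Then $(X_1,\dotsc,X_k)$ is exchangeable, so all the $X_i$ have the same law.

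\emph{Lower bound ($\ge$).} Take any nonempty $\mathcal{F}$-hom-free $G$ and let $(X_1,\dotsc,X_k)$ be a uniformly random ordered edge, i.e.\ uniform on the $k!\,e_G$ ordered edges. Then $H(X_1,\dotsc,X_k)=\log(k!\,e_G)$ by \cref{fact:entropy-uniform}, and $H(X_1)\le \log v_G$. Hence
\[
H(X_1,\dotsc,X_k)-kH(X_1)\ge \log(k!\,e_G v_G^{-k}).
\]
Taking the supremum over $G$ and applying \eqref{eq:pi-via-b} shows that the entropic supremum is at least $\log\pi(\mathcal{F})$.

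\emph{Upper bound ($\le$).} Fix $G$ and an exchangeable $(X_1,\dotsc,X_k)$ supported on orderings of edges of $G$. We want $H(X)-kH(X_1)\le\log\pi(\mathcal{F})$, and we do this by building a large hom-free graph: a blowup of $G$ whose part sizes follow the law of $X_1$. We may assume all probabilities are rational; the general case follows by continuity of entropy, since we only need a supremum.

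Take $N$ large and a typical-sequence construction. Let $T\subseteq V(G)^N$ be the type class of sequences with empirical distribution $\mathcal{L}(X_1)$; then $|T|=\exp(NH(X_1)+o(N))$. Define the $k$-uniform hypergraph $G^*$ on vertex set $T$ whose edges are the $k$-sets $\{s^1,\dotsc,s^k\}$ such that the joint sequence $(s^1_j,\dotsc,s^k_j)_{j\le N}$ has empirical (joint) type equal to $\mathcal{L}(X)$. Exchangeability ensures this condition does not depend on the order in which the $k$-set is listed. For each coordinate $j$, the tuple $(s^1_j,\dotsc,s^k_j)$ is an ordered edge of $G$, so the map $s\mapsto s_j$ is a homomorphism $G^*\to G$. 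Therefore $G^*$ is $\mathcal{F}$-hom-free, since any homomorphism $F\to G^*$ would compose to a homomorphism $F\to G$. The number of ordered edges is the size of the joint type class, $\exp(NH(X)+o(N))$. Hence
\[
k!\,e_{G^*}v_{G^*}^{-k}=\exp\bigl(N(H(X)-kH(X_1))+o(N)\bigr).
\]
Since $k!\,e_{G^*}v_{G^*}^{-k}\le\pi(\mathcal{F})$ by \eqref{eq:pi-via-b}, we get the inequality for the product form $\pi^N$, which is not yet what we want. The fix is to use a \emph{tensor power} argument instead.

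Define $G^{\otimes N}$ on $V(G)^N$ whose edges are the $k$-sets $\{s^1,\dotsc,s^k\}$ such that $(s^1_j,\dotsc,s^k_j)$ is an ordered edge of $G$ for every $j$. By the same coordinate-projection argument, $G^{\otimes N}$ is hom-free. Its blowup density satisfies $b(G^{\otimes N})\ge b(G)^N$, using product weights; so \eqref{eq:pi-via-b} forces $b(G)\le1$, which is consistent but not yet the bound we need.

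The cleaner route avoids tensor powers altogether and proves $H(X)-kH(X_1)\le\log b(G)$ directly. Set $x_v:=\Pr(X_1=v)$ and write $q(e)$ for the law of $X$ on ordered edges. Then
\[
H(X)-kH(X_1)=\sum_{e} q(e)\log\frac{\prod_{i}x_{e_i}}{q(e)}\le \log\sum_{e}\prod_i x_{e_i}
\]
by Jensen's inequality (concavity of $\log$), where the sum runs over the support of $q$, i.e.\ over ordered edges of $G$. The right-hand side is $\log\bigl(k!\sum_{A\in G}\prod_{v\in A}x_v\bigr)\le \log b(G)\le\log\pi(\mathcal{F})$. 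This is the plan I would follow. Its only delicate step is checking that the identity $kH(X_1)=-\sum_e q(e)\sum_i\log x_{e_i}$ holds; it does because each $X_i$ has the same law as $X_1$. With that, the typical-type construction above becomes unnecessary.
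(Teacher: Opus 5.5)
Your final argument is correct and is essentially the paper's proof. The lower bound is identical. Your Jensen step $\sum_e q(e)\log\bigl(\prod_i x_{e_i}/q(e)\bigr)\le\log\sum_{e}\prod_i x_{e_i}$ is just the paper's data-processing bound $\DKL{(X_1,\dotsc,X_k)}{X_1^{\otimes k}}\ge-\log\Pr(T(X_1^{\otimes k})=1)$, with $T$ the indicator of ordered edges, written out by hand. One small point: your Jensen sum runs only over the support of $q$, so extending it to all ordered edges gives $\le$ rather than $=$; this is harmless. The typical-type and tensor-power detours can simply be deleted, as you observe yourself.
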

\begin{proof}
  Denote the supremum by $S$.
  Given an arbitrary nonempty $\mathcal{F}$-hom-free hypergraph $G$, we may let $(X_1, \dotsc, X_k)$ be the uniformly chosen random ordered edge of $G$ to obtain
  \[
    S \ge H(X_1, \dotsc, X_k) - kH(X_1) = \log(k! \cdot e_G) - kH(X_1) \ge \log(k! \cdot e_G) - k\log v_G.
  \]
  Taking the supremum over all $\mathcal{F}$-hom-free $G$ yields $S \ge \log \pi(\mathcal{F})$.
  For the reverse inequality, suppose that $(X_1, \dotsc, X_k)$ is the uniform ordering of a random edge of some $\mathcal{F}$-hom-free hypergraph $G$ and let $T \colon V(G)^k \to \{0,1\}$ be the indicator of (ordered) edges of $G$.
  Write $X_1^{\otimes k}$ for the $k$-dimensional vector $X_1 \otimes \dotsb \otimes X_1$.
  By \cref{fact:entropy-subadditivity}, identity \cref{eq:subadditivity-via-DKL}, and the data processing inequality (\cref{fact:data-processing}),
  \begin{multline*}
    kH(X_1) - H(X_1, \dotsc, X_k)
    = H(X_1^{\otimes k}) - H(X_1, \dotsc, X_k)
    = \DKL{(X_1, \dotsc, X_k)}{X_1^{\otimes k}} \\
    \ge \DKL{T(X_1, \dotsc, X_k)}{T(X_1^{\otimes k})}
    = \dKL(1, \Pr(T(X_1^{\otimes k})=1))
    = - \log \Pr(T(X_1^{\otimes k}) = 1).
  \end{multline*}
  Finally, a moment's thought reveals that the supremum of $\Pr(T(X_1^{\otimes k})=1)$ over all random $X_1 \in V(G)$ is nothing else but $b(G)$, which implies that $S \le \log b(G) \le \log \pi(\mathcal{F})$.
\end{proof}

To illustrate~\cref{prop:Turan-density-entropy}, Chao and Yu~\cite{ChaYu-Turan} provided an entropy-based proof of Turán's theorem that is based on the ideas described in \Cref{sec:Sidorenko}, which we present in the remainder of this section.
A~key role is played by the following lemma.

\begin{lemma}[\cite{ChaYu-Turan}]
  \label{lemma:disjoint-supp}
  Let $X_1, \dotsc, X_n$ be $\mathcal{X}$-valued random variables and let
  \[
    s \coloneqq \max_{x \in \mathcal{X}} |\{i \in \br{n} : \Pr(X_i = x) > 0\}|.
  \]
  There exists a random variable $I \in \br{n}$ that is independent of $X_1, \dotsc, X_n$ and satisfies
  \[
    s \cdot \exp(H(X_I)) \ge \sum_{i=1}^n \exp(H(X_i)).
  \]
\end{lemma}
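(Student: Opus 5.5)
We take $I$ independent of $(X_1,\dotsc,X_n)$ with $\Pr(I=i) = \lambda_i \coloneqq e^{H(X_i)}/Z$, where $Z \coloneqq \sum_{j=1}^n e^{H(X_j)}$. The claim then reduces to $H(X_I) \ge \log Z - \log s$, and this should follow from splitting the entropy of the pair $(I, X_I)$ in two ways.

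By the chain rule (\cref{fact:chain-rule}), $H(X_I) + H(I \mid X_I) = H(I, X_I) = H(I) + H(X_I \mid I)$. Since $I$ is independent of the $X_j$, we have $H(X_I \mid I) = \sum_i \lambda_i H(X_i)$. Also $H(I) = -\sum_i \lambda_i \log \lambda_i = \log Z - \sum_i \lambda_i H(X_i)$. So the Gibbs choice of weights gives exactly
\[
  H(I) + H(X_I \mid I) = \log Z.
\]
This is the variational identity $\max_\lambda \bigl(H(\lambda) + \sum_i \lambda_i H(X_i)\bigr) = \log Z$, and it motivates the choice of $\lambda$.

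It remains to show $H(I \mid X_I) \le \log s$. Condition on $X_I = x$ for any $x$ with positive probability. Only indices $i$ with $\Pr(X_i = x) > 0$ can then occur as values of $I$, because $\Pr(I = i, X_I = x) = \lambda_i \Pr(X_i = x)$. By the definition of $s$ there are at most $s$ such indices. So by \cref{fact:entropy-uniform} the conditioned variable $I^x$ has entropy at most $\log s$, and averaging over $x$ gives $H(I \mid X_I) \le \log s$.

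Combining the two steps, $H(X_I) = \log Z - H(I \mid X_I) \ge \log Z - \log s$, which is $s \cdot \exp(H(X_I)) \ge \sum_i \exp(H(X_i))$. The only real step is finding the right $I$; I would guess it from the Gibbs variational principle as above, after which every estimate is immediate. A minor point: all $\lambda_i > 0$, so there are no degenerate cases to handle.
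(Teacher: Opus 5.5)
Your proposal is correct and follows essentially the same route as the paper: the same Gibbs weights $\Pr(I=i)\propto\exp(H(X_i))$, the same two-way chain-rule expansion of $H(I,X_I)$ giving $\log \sum_i \exp(H(X_i))$, and the same bound $H(I \mid X_I) \le \log s$ from the support condition. Your explicit check that $\Pr(I=i, X_I=x)=\lambda_i\Pr(X_i=x)$ just spells out the paper's remark that knowing $X_I$ leaves at most $s$ options for $I$.
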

\begin{proof}
  Denote by $\Sigma$ the sum appearing in the statement of the lemma and, for each $i \in \br{n}$, let $p_i \coloneqq \exp(H(X_i)) / \Sigma$.
  Let $I \in \br{n}$ be the random index satisfying $\Pr(I = i) = p_i$ for all $i$.
  It follows from the chain rule (\cref{fact:chain-rule}) that
  \[
    H(X_I) + H(I \mid X_I) = H(X_I, I) = H(I) + H(X_I \mid I) = \sum_{i=1}^{n} p_i \cdot \left( -\log p_i + H(X_i) \right) = \log \Sigma.
  \]
  Further, since knowing $X_I$ leaves at most $s$ options for $I$, we have $H(I \mid X_I) \le \log s$.
  Substituting this inequality into the above identity yields the assertion of the lemma.
\end{proof}

Fix an integer $r \ge 2$.
The inequality $\pi(K_{r+1}) \ge 1-1/r$ is easy to prove.
For example, one may deduce it from \cref{prop:Turan-density-entropy} by considering a uniformly random ordered edge $(X_1, X_2)$ of $K_r$, which satisfies $H(X_1, X_2) - 2H(X_1) = \log(r(r-1)) - 2\log r = \log(1-1/r)$.
For the reverse inequality, suppose that $(X_1, X_2)$ is a uniformly ordered random edge of some $K_{r+1}$-free graph $G$.
Our goal is to prove that
\[
  q\coloneqq \exp(H(X_1,X_2)-2H(X_1)) \le 1-1/r.
\]
The heart of the argument is the following statement.

\begin{claim}
  For every positive integer $N$, there exist random vectors $T_1, \dotsc, T_N \in V(G)^N$ such that, for all $i, j \in \br{N}$,
  \begin{enumerate}[label=(\roman*)]
  \item
    \label{item:Turan-stars}
    $T_{i,i}$ is adjacent to each of $T_{i,1}, \dotsc, T_{i,i-1}$,
  \item
    \label{item:Turan-entropy}
    $H(T_i) = (N-i+1) \cdot H(X_1) + (i-1) \cdot H(X_2 \mid X_1) = N \cdot H(X_1) + (i-1) \cdot \log q$, and
  \item
    \label{item:Turan-marginals}
    $T_{i,j}$ has the same distribution as $X_1$.
  \end{enumerate}
\end{claim}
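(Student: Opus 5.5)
The construction is a star-shaped analogue of the one used in the proof of \cref{thm:Sidorenko-dominating-vertex}: vertex $i$ plays the role of the dominating vertex $\alpha$, coordinates $1, \dotsc, i-1$ are its leaves, and coordinates $i+1, \dotsc, N$ are left completely independent.

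\emph{Construction.} Fix $i \in \br{N}$ and build the vector $T_i = (T_{i,1}, \dotsc, T_{i,N})$ in three steps:
\begin{enumerate}[label=(\alph*)]
\item sample $T_{i,i}$ with the distribution of $X_1$;
\item conditionally on $T_{i,i}$, let $T_{i,1}, \dotsc, T_{i,i-1}$ be conditionally independent copies of $X_2$ given $X_1 = T_{i,i}$;
\item let $T_{i,i+1}, \dotsc, T_{i,N}$ be independent copies of $X_1$, independent of everything else.
\end{enumerate}

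\emph{Property~\ref{item:Turan-stars}.} For each $j<i$, the pair $(T_{i,i}, T_{i,j})$ has the same distribution as $(X_1, X_2)$. Since $(X_1,X_2)$ is always an (ordered) edge of $G$, $T_{i,j}$ is adjacent to $T_{i,i}$ almost surely.

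\emph{Property~\ref{item:Turan-marginals}.} The ordering of the random edge is uniform, so $(X_1,X_2)$ and $(X_2,X_1)$ have the same distribution. In particular $X_2$ has the same distribution as $X_1$. Hence every coordinate $T_{i,j}$ is distributed like $X_1$: trivially for $j \ge i$, and for $j<i$ because $T_{i,j}$ is distributed like $X_2$.

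\emph{Property~\ref{item:Turan-entropy}.} Split the coordinates into the star part $(T_{i,1}, \dotsc, T_{i,i})$ and the independent tail $(T_{i,i+1}, \dotsc, T_{i,N})$.
\begin{itemize}
\item By \cref{fact:entropy-subadditivity}, the entropy of $T_i$ is the entropy of the star part plus $(N-i)\,H(X_1)$, because the tail coordinates are mutually independent and independent of the star part.
\item By the chain rule (\cref{fact:chain-rule}), the star part has entropy $H(T_{i,i}) + H(T_{i,1}, \dotsc, T_{i,i-1} \mid T_{i,i})$.
\item The leaves are conditionally independent given $T_{i,i}$, so by the equality case of \cref{fact:conditional-entropy-subadditivity} the second term equals $(i-1)\,H(X_2 \mid X_1)$.
\end{itemize}
This gives the first expression in~\ref{item:Turan-entropy}:
\[
  H(T_i) = (N-i+1) \cdot H(X_1) + (i-1) \cdot H(X_2 \mid X_1).
\]
For the second expression, use $H(X_2 \mid X_1) = H(X_1,X_2) - H(X_1)$ together with $\log q = H(X_1,X_2) - 2H(X_1)$. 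These give $H(X_2\mid X_1) = H(X_1) + \log q$, and substituting yields $N \cdot H(X_1) + (i-1) \cdot \log q$.

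No step here is a real obstacle. The only point needing care is~\ref{item:Turan-marginals}, which relies on the uniform ordering of the edge to identify the laws of $X_1$ and $X_2$. The substantive work presumably comes later, when these $T_i$ are combined, for instance via \cref{lemma:disjoint-supp}, together with $K_{r+1}$-freeness.
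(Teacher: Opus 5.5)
Your proposal is correct and uses exactly the paper's construction: $T_{i,i}, \dotsc, T_{i,N}$ are independent copies of $X_1$, and $T_{i,1}, \dotsc, T_{i,i-1}$ are conditionally independent copies of $X_2$ given $X_1 = T_{i,i}$. The paper only sketches this construction. Your verification of (i)--(iii) correctly fills in what it omits, namely exchangeability of $(X_1,X_2)$ for the marginals and the equality cases of subadditivity and conditional subadditivity for the entropy.
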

\begin{proof}[Proof (sketch)]
  For each $i \in \br{N}$, we construct $T_i$ as follows (cf.~the proof of~\cref{thm:Sidorenko-dominating-vertex}).
  First, let $T_{i,i}, \dotsc, T_{i,N}$ be independent random  samples from the distribution of $X_1$.
  Second, given $T_{i,i}$, let $T_{i,1}, \dotsc, T_{i,i-1}$ be conditionally independent copies of $X_2$ conditioned on $X_1 = T_{i,i}$.
\end{proof}

It follows from \ref{item:Turan-stars} that if, for some sequence $t \in V(G)^N$, we have $\Pr(T_i = t) > 0$ for every $i \in C \subseteq \br{N}$, then the vertices $(t_i)_{i \in C}$ are pairwise adjacent.
Since $G$ is $K_{r+1}$-free, no such $t$ can belong to the support of more than $r$ among the $T_1, \dotsc, T_N$.
Using \cref{lemma:disjoint-supp} and~\ref{item:Turan-entropy}, we can define a random variable $I \in \br{N}$ satisfying
\[
  r \cdot \exp(H(T_I)) \ge \sum_{i=1}^N \exp(H(T_i)) = \sum_{i=1}^N q^{i-1} \cdot \exp(N \cdot H(X_1)).
\]
On the other hand, since each of the $N$ coordinates of $T_I$ has the same distribution as $X_1$, by~\ref{item:Turan-marginals}, we have $H(T_I) \le N \cdot H(X_1)$.
Substituting this estimate into the above inequality and letting $N$ tend to infinity, we obtain the inequality $r \ge 1/(1-q)$, which is equivalent to the desired estimate $q \le 1-1/r$.


\section*{Acknowledgements.}
I would like to thank Asaf Cohen Antonir, Shira Ben Dor, Marcelo Campos, David Conlon, Sahar Diskin, Ehud Friedgut, Matan Harel, Ilay Hoshen, Vishesh Jain, Matthew Jenssen, Gady Kozma, Eden Kuperwasser, Tom Meyerovitch, Frank Mousset, Rajko Nenadov, Jinyoung Park, Ron Peled, Yinon Spinka, and Adam (Zsolt) Wagner for many interesting and enriching conversations on the topic of entropy.
These discussions have greatly influenced the content of this paper.
Having said that, all errors and omissions are entirely my own.
Finally, special thanks to David Conlon, Eden Kuperwasser, Joonkyung Lee, and Hung-Hsun Yu for their comments on an earlier version of this paper.

\bibliographystyle{abbrv}
\bibliography{ICM-entropy}
\end{document}